\newcommand{\LL}{\ensuremath{\mathbb{L}}}
\newcommand{\F}{\ensuremath{\mathbb{F}}}
\newcommand{\ka}{\ensuremath{\Bbbk}}
\newcommand{\kka}{\ensuremath{\overline{\Bbbk}}}
\newcommand{\XX}{\ensuremath{\overline{X}}}
\newcommand{\Pro}{\ensuremath{\mathbb{P}}}
\newcommand{\Am}{\ensuremath{\operatorname{Am}}}
\newcommand{\Aut}{\ensuremath{\operatorname{Aut}}}
\newcommand{\Gal}{\ensuremath{\operatorname{Gal}}}
\newcommand{\Pic}{\ensuremath{\operatorname{Pic}}}
\newtheorem{theorem}[equation]{Theorem}
\newtheorem{proposition}[equation]{Proposition}
\newtheorem{lemma}[equation]{Lemma}
\newtheorem{corollary}[equation]{Corollary}
\theoremstyle{definition}
\newtheorem{example}[equation]{Example}
\newtheorem{definition}[equation]{Definition}
\theoremstyle{remark}
\newtheorem{remark}[equation]{Remark}
\newtheorem{notation}[equation]{Notation}
\newtheorem{question}[equation]{Question}
\title{Birational classification of pointless del Pezzo surfaces of degree $8$}
\address{Steklov Mathematical Institute of Russian Academy of Sciences, 8 Gubkina st., Moscow, 119991, Russia}
\address{Laboratory of Algebraic Geometry, National Research University Higher School of Economics, 6 Usacheva str., Moscow 119048, Russia}
\email{trepalin@mccme.ru}
\thanks{This work is supported by the Russian Science Foundation under grant \textnumero 18-11-00121.}
\author{Andrey Trepalin}
\begin{document}

\begin{abstract}

Let $\ka$ be a perfect field. Recently \mbox{J.-L.\,Colliot-Th\'el\`ene} showed that two pointless quadric surfaces over $\ka$ are birationally equivalent if and only if they are isomorphic. We show that this result holds for arbitrary del Pezzo surfaces of degree $8$ with the Picard number $1$, and describe minimal surfaces birationally equivalent to a given pointless del Pezzo surface of degree $8$.

\end{abstract}



\maketitle

\section{Introduction}

Let $\ka$ be a perfect field, and $Q$ be a smooth quadric surface in $\Pro^3_{\ka}$. If $\ka$ is~algebraically closed then all smooth quadrics are isomorphic, but if $\ka$ is not algebraically closed then two quadrics can be non-isomorphic. The biregular classification of quadric surfaces over nonclosed fields is well-known (see Section 2 for the details). Moreover, recently in~\cite{CT21} J.-L.\,Colliot-Th\'el\`ene showed that the birational classification is almost the~same.

Note that if there is a $\ka$-point on a quadric surface $Q$ in $\Pro^3_{\ka}$, then one can consider a~projection from this point to $\Pro^2_{\ka}$. This projection is a birational map $Q \dashrightarrow \Pro^2_{\ka}$. Therefore any quadric with a $\ka$-point is $\ka$-rational. In particular, any two such quadrics are birationally equivalent over $\ka$. But if there are no $\ka$-points on a quadric then it is obviously not $\ka$-rational. The birational classification of pointless quadric surfaces is given in~the~following theorem.

\begin{theorem}[{\cite{CT21}}]
\label{main1}
Let $\ka$ be an algebraically nonclosed field such that $\operatorname{char} \ka \neq 2$, and~let~$Q_1$ and~$Q_2$ be two pointless quadrics in $\Pro^3_{\ka}$. Then $Q_1$ is birationally equivalent to~$Q_2$ if~and~only if $Q_1 \cong Q_2$.

\end{theorem}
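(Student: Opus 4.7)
The ``if'' direction is immediate, so the plan is to establish the converse: any birational equivalence $\varphi \colon Q_1 \dashrightarrow Q_2$ between pointless quadrics must force $Q_1 \cong Q_2$. My approach is to exploit the minimal model theory of geometrically rational surfaces over $\ka$, following Iskovskikh.

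First I would verify that a pointless quadric $Q$ is $\ka$-minimal. Over $\kka$, $Q\cong\Pro^1\times\Pro^1$, so $\Pic(Q_{\kka})\cong\Z\oplus\Z$ with the two rulings as generators, and the Galois action either swaps the rulings (giving $\Pic(Q)$ of rank $1$, generated by $-\frac{1}{2}K_Q$) or preserves them (in which case the biregular classification of Section~$2$ presents $Q$ as a product $C_1\times C_2$ of conics, at least one of which is pointless). In either case there is no Galois-stable collection of disjoint $(-1)$-curves to contract, so $Q$ is $\ka$-minimal, and the structure morphism $Q\to\operatorname{Spec}\ka$ (respectively, the two projections to pointless conics) is a Mori fiber space structure.

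Next I would apply Iskovskikh's factorization theorem: any birational map between $\ka$-minimal geometrically rational surfaces decomposes as a composition of elementary (Sarkisov) links between Mori fiber spaces. It then suffices to enumerate all Sarkisov links starting from a pointless quadric, and to check that each such link either is an isomorphism or leads to a Mori fiber space --- necessarily a conic bundle --- from which no further chain of links can reach a non-isomorphic pointless quadric. Every link begins with the blowup of a closed $\ka$-point or of a smooth $\ka$-rational curve in the source; the absence of $\ka$-points on $Q_1$ immediately suppresses most of the available links, and the remaining ones can be analyzed directly from Iskovskikh's tables in degree~$8$. Combined with the biregular invariants recorded in Section~$2$, this forces $Q_1\cong Q_2$.

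The main obstacle will be the combinatorial case analysis in the previous step: one must rule out Sarkisov chains $Q_1\dashrightarrow X_1\dashrightarrow\cdots\dashrightarrow X_n\dashrightarrow Q_2$ that pass through intermediate conic bundles over pointless conics and return to a different pointless quadric. The crucial invariant against such chains is the Brauer class of the Clifford algebra of $Q$ (equivalently, the pair of quaternion algebras recording the ruling classes when $\rho(Q)=2$), which is preserved under Sarkisov links and already determines $Q$ up to isomorphism by the biregular classification; propagating it through any hypothetical decomposition yields $Q_1\cong Q_2$.
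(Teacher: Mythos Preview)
Your outline is essentially the paper's strategy: factor a birational map $Q_1\dashrightarrow Q_2$ into Sarkisov links and control each link. For $\rho(Q)=2$ your proposed invariant works: $Q\cong C\times C$, the group $\Am(Q)=\langle b(C)\rangle\subset\operatorname{Br}(\ka)$ is a genuine birational invariant (it is the kernel of $\operatorname{Br}(\ka)\to\operatorname{Br}(\ka(Q))$), and it pins down $C$. This is exactly Theorem~\ref{mainrho2}(2).

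The gap is in the $\rho(Q)=1$ case. Here $\Am(Q)=0$, so the Brauer-type invariant you want lives not over $\ka$ but over the discriminant extension $\LL$: the even Clifford algebra of $q$ is a quaternion algebra over $\LL$, and this is the same datum as the paper's pair $(\LL,\Am(Q_{\LL}))$. You assert that this is ``preserved under Sarkisov links'', but that is precisely what has to be proved, and it is the nontrivial core of the argument. The field $\LL$ is read off from the Galois action on $\Pic(\overline{Q})$, which is \emph{not} a birational invariant a priori; one cannot recover $\LL$ from $\ka(Q)$ by any general mechanism without essentially reproving the theorem. In the paper this preservation is established link by link: the type~$\mathrm{II}$ links centred at points of degree $4$ or $6$ return an isomorphic surface directly from Iskovskikh's tables (Lemmas~\ref{DPlink6}, \ref{DPlink4}), but the chain
\[
Q_1 \leftarrow Z_0 \dashrightarrow Z_1 \dashrightarrow \cdots \dashrightarrow Z_n \rightarrow X'
\]
through relatively minimal degree-$6$ conic bundles requires a genuine computation (Lemma~\ref{DPlink2}): one tracks the hexagon of $(-1)$-curves on each $Z_i$ and uses the parity of intersection numbers, forced by the even degree of every closed point (Lemma~\ref{DPpoints}), to show that the two contracted sections $E'_1,E'_2$ on $Z_n$ are still swapped by $\Gal(\LL/\ka)$, whence $\rho(X'_{\LL})=2$ and $\LL$ is again the splitting field of $X'$. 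Only then does the birational invariance of $\Am(\,\cdot\,_{\LL})$ finish the job via Theorem~\ref{BBreg}.

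That this step cannot be skipped is made explicit in the Remark following Lemma~\ref{DPlink2}: if one drops pointlessness and allows a link centred at a $\ka$-point, the same chain produces an $X'$ whose splitting field is \emph{different} from $\LL$. So ``the Clifford class is preserved under Sarkisov links'' is false without the parity input coming from $Q(\ka)=\varnothing$, and your proposal must supply exactly the argument of Lemma~\ref{DPlink2} (or an equivalent) rather than invoke the invariant as given.
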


The aim of this paper is to obtain an analogue of Theorem \ref{main1} for the case of~arbitrary pointless del Pezzo surfaces of degree $8$. Let us recall the definition of a del Pezzo surface.

\begin{definition}
\label{DPdef}
A {\it del Pezzo surface} is a smooth projective surface~$X$ such that the~anticanonical class $-K_X$ is ample. The number $d = K_X^2$ is called the {\it degree} of a del Pezzo surface $X$.
\end{definition}

Any quadric in $\Pro^3_{\ka}$ is a del Pezzo surface of degree $8$. Moreover, over algebraically closed field any del Pezzo surface of degree $8$ is either isomorphic to $\Pro^1_{\ka} \times \Pro^1_{\ka}$, or isomorphic to~the~blowup of $\Pro^2_{\ka}$ at a point. Therefore for a perfect field $\ka$ and a del Pezzo surface $X$ of degree $8$ there are two possibilities: either $\XX = X \otimes_{\ka} \kka$ is~isomorphic to $\Pro^1_{\kka} \times \Pro^1_{\kka}$, or~$\XX$ is isomorphic to the blowup of $\Pro^2_{\kka}$ at a point. In the former case if $X(\ka) \neq \varnothing$ then~$X$ is~isomorphic to a quadric in $\Pro^3_{\ka}$. In the latter case there is a unique $(-1)$-curve on $\XX$. This curve is defined over $\ka$, therefore we can contract this curve and get a~morphism~${X \rightarrow \Pro^2_{\ka}}$. In particular, one has $X(\ka) \neq \varnothing$. Thus for any pointless del Pezzo surface $X$ of degree $8$ the surface $\XX$ is isomorphic to~$\Pro^1_{\kka} \times \Pro^1_{\kka}$. 

If $X$ is a pointless del Pezzo surface of degree $8$ then $\Pic\left(\XX\right) = \Pic\left(\Pro^1_{\kka} \times \Pro^1_{\kka}\right) \cong \mathbb{Z}^2$. Therefore the Picard number $\rho(X) = \operatorname{rk} \Pic(X)$ is equal to $1$ or $2$, since $\Pic\left(X\right) \subset \Pic\left(\XX\right)$.

To establish more results about birational classification of pointless del Pezzo surfaces of degree $8$ we need the following definition.

\begin{definition}
\label{minimality}
A smooth projective surface $S$ is called {\it minimal} if any birational morphism of~smooth surfaces $S \rightarrow S'$ is an isomorphism.
\end{definition}

The classification of minimal geometrically rational surfaces is well-known.

\begin{theorem}[{cf. \cite[Theorems 1, 4, 5]{Isk79}}]
\label{Minclass}
Let $S$ be a minimal geometrically rational surface. Then either $S$ admits a conic bundle structure over a~conic with~$\rho(S) = 2$, or $S$ is a del Pezzo surface with $\rho(S) =1$. In the former \mbox{case $K_S^2 \notin \{3, 5, 6, 7\}$,} and for $K_S^2 = 8$ the surface $S$ is not isomorphic to the blowup of $\Pro^2_{\ka}$ at a point.
\end{theorem}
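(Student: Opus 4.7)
The plan is to apply Mori's cone theorem and the two-dimensional minimal model program over $\ka$. Since $S$ is geometrically rational, $K_S$ is not pseudo-effective, hence $\overline{NE}(S)$ contains a $K_S$-negative extremal ray $R$. Let $\varphi\colon S\to S'$ denote the associated extremal contraction. A priori three types of contractions are possible: a divisorial contraction of a Galois-invariant collection of pairwise disjoint $(-1)$-curves on $\overline{S}=S\otimes_{\ka}\kka$; a fibration onto a smooth curve with general fiber $\Pro^1$; or a contraction to $\operatorname{Spec}\ka$. Minimality of $S$ excludes the first type at once.

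If $\dim S'=0$, then $\rho(S)=1$ and $-K_S$ is ample by $\varphi$-ampleness, so $S$ is a del Pezzo surface with $\rho(S)=1$. If $\dim S'=1$, then $\pi=\varphi\colon S\to C$ is a conic bundle with $\rho(S)=2$ and $C$ is a smooth projective curve. Since $\overline{S}\to\overline{C}$ is a $\Pro^1$-fibration of a rational surface, $\overline{C}\cong\Pro^1_{\kka}$, so $C$ is a smooth projective geometrically rational curve of genus $0$, which embeds anticanonically as a conic in $\Pro^2_{\ka}$.

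For the restriction on $K_S^2$ in the conic bundle case, one notes that $\overline{S}$ is obtained from some Hirzebruch surface $\mathbb{F}_n$ by blowing up $r$ points on distinct fibers of the ruling, so $K_S^2=8-r$. Each reducible geometric fiber consists of two $(-1)$-curves meeting transversally at a single point, and $\Gal(\kka/\ka)$ acts on the $2r$ components preserving the partition into $r$ pairs. For each of the excluded values $r\in\{1,2,3,5\}$ I would enumerate the possible Galois orbit patterns on these components; in every case one exhibits a Galois-invariant set of pairwise disjoint $(-1)$-curves on $\overline{S}$, possibly also using Galois-invariant sections of the intermediate Hirzebruch model. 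Contracting such a set produces a birational morphism to a smooth surface, contradicting the minimality of $S$. This rules out $K_S^2\in\{7,6,5,3\}$. In the boundary case $K_S^2=8$ we have $r=0$ and $\overline{S}\cong\mathbb{F}_n$; if $n=1$, the unique $(-1)$-curve of $\mathbb{F}_1$ is automatically Galois-invariant and can be contracted to produce $\Pro^2_{\ka}$, so one concludes that $\overline{S}$ is not $\mathbb{F}_1$, i.e.\ $S$ is not the blowup of $\Pro^2_{\ka}$ at a point.

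The main obstacle will be the case-by-case combinatorial analysis in the previous paragraph. One must systematically run through Galois-orbit configurations on the components of degenerate fibers (and, when necessary, on negative sections of the Hirzebruch model) for $r\in\{1,2,3,5\}$ and verify in each configuration the existence of a Galois-stable contractible collection of disjoint $(-1)$-curves. The statement is sharp in that for $r=4$ and $r\geq 6$ there exist Galois actions on the set of components that obstruct all such contractions, so the enumeration must be carried out carefully and cannot be replaced by a uniform numerical argument.
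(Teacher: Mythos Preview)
The paper does not supply its own proof of this theorem; it is quoted as background from Iskovskikh's 1979 paper (Theorems~1, 4, and 5 there), so there is no in-paper argument to compare against. Your outline via the MMP over $\ka$ is the standard modern route to Iskovskikh's dichotomy and is correct as far as it goes: non-pseudo-effectivity of $K_S$, the classification of extremal contractions, and the exclusion of the divisorial case by minimality yield exactly the del Pezzo\,/\,conic bundle alternative with the stated Picard numbers.

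Where your sketch is thin is precisely where you flag it: the exclusion of $K_S^2\in\{3,5,6,7\}$ for minimal conic bundles. Two cautions. First, for $r=3$ and $r=5$ the surface $\overline{S}$ need not be a del Pezzo surface (the $r$ points blown up on $\mathbb{F}_n$ may lie on a negative section), so you cannot simply quote the list of $(-1)$-curves on a del Pezzo of the relevant degree; you must either show that the non--del Pezzo configurations already carry a Galois-invariant contractible curve (typically via the unique most negative section), or reduce to the del Pezzo case first. Second, ``sections of the intermediate Hirzebruch model'' is not quite the right pool of auxiliary curves: already for $r=2$ the contractible Galois-invariant pair consists of the two $(-1)$-\emph{sections of the conic bundle on $S$ itself} (this is exactly Proposition~\ref{CB6}(iii)--(iv) in the present paper), and for $r=3,5$ one has to keep track of all $(-1)$-sections of $S\to C$, not merely the one inherited from $\mathbb{F}_n$. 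Iskovskikh's original arguments handle these points; your plan is compatible with them but would need to be fleshed out along these lines to become a proof.
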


Note that any pointless del Pezzo surface $X$ of degree $8$ is minimal. If $\rho(X) = 1$ then it is obvious, and if $\rho(X) = 2$ then $X$ is isomorphic to $C_1 \times C_2$, where $C_1$ and $C_2$ are smooth conics, and the projections $X \rightarrow C_1$ and $X \rightarrow C_2$ define two structures of conic bundles on $X$.

We want to study which surfaces are birationally equivalent to pointless del Pezzo surfaces of degree $8$. Obviously, for any variety one can equivariantly blow up a collection of points defined over $\ka$ and get a birationally equivalent variety. Therefore for a given pointless del Pezzo surface $X$ of degree $8$ the natural problem is to find minimal surfaces birationally equivalent to $X$.

For the case $\rho(X) = 2$ the answer seems to be well-known to experts. For example, many results for this case can be found in \cite{Kol05}. For completeness we prove the following theorem.

\begin{theorem}
\label{mainrho2}
Let $\ka$ be a perfect field, and let $X$ be~a~pointless del Pezzo surface of~degree~$8$ such that $\rho(X) = 2$. Then $X$ is a product of~two~smooth conics and there are two following possibilities.

\begin{enumerate}

\item The surface $X$ is isomorphic to $C_1 \times C_2$, where $C_1$ and $C_2$ are two non-trivial smooth conics not isomorphic to each other. If the Brauer product $C_3 = C_1 * C_2$ (for the definition see Definition \ref{BPdef} or \cite[7]{Kol05}) is defined, then any minimal surface birationally equivalent to $X$ is isomorphic to $C_1 \times C_2$, $C_1 \times C_3$ or $C_2 \times C_3$. Otherwise, any minimal surface birationally equivalent to $X$ is isomorphic to $X$.

\item The surface $X$ is isomorphic to $C \times C$ or $C \times \Pro^1_{\ka}$, where $C$ is a non-trivial smooth conic. Then any minimal surface birationally equivalent to $X$ is isomorphic \mbox{to $C \times C$,} $C \times \Pro^1_{\ka}$ or to a $\ka$-form of Hirzebruch surface $\mathbb{F}_{2k}$ admitting a conic bundle structure over $C$. Such $\ka$-form is unique up to isomorphism for a given $k$.

\end{enumerate}

\end{theorem}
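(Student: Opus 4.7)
\emph{Proof plan.} The overall strategy is to realize $X$ concretely as a product of conics and then classify all birational models via Iskovskikh's factorization of birational maps between minimal geometrically rational surfaces into elementary Sarkisov links. Concretely, since $\XX \cong \Pro^1_{\kka} \times \Pro^1_{\kka}$ and $\rho(X) = 2 = \rho(\XX)$, the Galois group $\Gal(\kka/\ka)$ acts trivially on $\Pic(\XX)$, so both rulings descend to $\ka$ and give two $\Pro^1$-bundle morphisms $\pi_i \colon X \to C_i$ onto smooth $\ka$-conics $C_i$. The product map $(\pi_1,\pi_2)$ is then an isomorphism $X \cong C_1 \times C_2$. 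Since $(C_1 \times C_2)(\ka) = C_1(\ka) \times C_2(\ka)$, pointlessness of $X$ forces at least one $C_i$ to be non-trivial, producing the two cases of the theorem.

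Next, by Nishimura's lemma any smooth projective surface birational to the pointless $X$ is itself pointless; combined with Theorem \ref{Minclass}, each minimal model $Y$ of $X$ is either a pointless del Pezzo with $\rho(Y)=1$ or a pointless conic bundle with $\rho(Y)=2$. Invoking Iskovskikh's factorization of such birational maps into elementary Sarkisov links, I would catalogue the links available at $X$ and at each surface reachable from $X$. The change-of-ruling link on $X = C_1 \times C_2$ simply swaps $\pi_1$ and $\pi_2$. A link contracting the conic bundle $\pi_i$ to a $\rho=1$ del Pezzo would require a $\ka$-section of $\pi_i$, i.e.\ a $\ka(C_i)$-point of the generic fiber $C_j \otimes \ka(C_i)$; by Amitsur's theorem $\ker\bigl(\operatorname{Br}(\ka) \to \operatorname{Br}(\ka(C_i))\bigr) = \langle [C_i] \rangle$, and such a point exists only if $[C_j] \in \{0,[C_i]\}$, which rules out case (1). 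The elementary-transformation link is performed at a closed degree-$2$ point whose two geometric components lie on distinct smooth fibers; blowing up and contracting the strict transforms of these two fibers produces a new smooth conic bundle over $C_1$, whose generic fiber has the same Brauer class in $\operatorname{Br}(\ka(C_1))$. Using the Amitsur sequence to descend this class to $\operatorname{Br}(\ka)$, when the total space is again a product of two $\ka$-conics it is precisely $C_1 \times C_3$ with $[C_3] = [C_1] + [C_2]$ in $\operatorname{Br}(\ka)[2]$; that is, $C_3$ is the Brauer product $C_1 \ast C_2$ whenever this sum has index at most $2$. The symmetric relation $[C_1] + [C_2] + [C_3] = 0$ then shows that $\{C_1 \times C_2, C_1 \times C_3, C_2 \times C_3\}$ is closed under further elementary transformations, yielding the list in case (1); if $C_1 \ast C_2$ is undefined no such link exists and $X$ is its own unique minimal model.

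In case (2) the generic fiber of $\pi_1 \colon X \to C$ already splits over $\ka(C)$, so iterated elementary transformations at closed degree-$2$ points yield a sequence of conic bundles over $C$ whose generic fibers remain $\Pro^1_{\ka(C)}$. These are the $\ka$-forms of the even Hirzebruch surfaces $\mathbb{F}_{2k}$ for varying $k$, and uniqueness of each such form follows from a Galois cohomology computation in $H^1(\Gal(\kka/\ka), \Aut(\mathbb{F}_{2k}))$: requiring the given conic bundle structure over $C$ to be Galois-equivariant pins down the cohomology class uniquely.

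The main technical obstacle will be the precise effect of an elementary-transformation link on the Brauer class of the generic fiber, together with verifying that the blow-up of a closed degree-$2$ point followed by contraction of the strict transforms of the ambient fibers really yields a smooth $\ka$-link whose target is again a product of conics (and not some unexpected twisted form). Equally delicate is excluding, by tracking invariants through chains of such links, the possibility of ever reaching a pointless del Pezzo with $\rho = 1$ and degree strictly less than $8$, and verifying the existence of the required $\ka$-form of $\mathbb{F}_{2k}$ for every $k$.
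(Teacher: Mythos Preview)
Your plan shares the paper's backbone---Iskovskikh's factorization into Sarkisov links---but differs in the organizing invariant. The paper's central tool is the Amitsur subgroup $\Am(X) = \Pic(\XX)^{G_\ka}/\Pic(X) \subset \operatorname{Br}(\ka)$, a birational invariant developed in Section~2. Propositions~\ref{ClassCB} and~\ref{ClassDP} classify \emph{all} minimal geometrically rational surfaces with a prescribed $\Am$, so in case~(1) one has $\Am(X) = \langle b(C_1), b(C_2)\rangle \cong (\Z/2\Z)^2$, and any minimal $X' \approx X$ is forced by the classification to be a product of two conics whose Brauer classes are among the three nonzero elements of $\Am(X)$---yielding the three products at once, with no need to track classes through individual links. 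This also dissolves your ``technical obstacle'' of excluding low-degree $\rho=1$ models: Proposition~\ref{ClassDP} shows no $\rho=1$ del Pezzo has $\Am \cong (\Z/2\Z)^2$. In case~(2) the paper combines $\Am(X) \cong \Z/2\Z$ with the fact that type~II and~IV links preserve both $K_S^2$ and $\Am(S)$, so one stays among the surfaces of Proposition~\ref{ClassCB}(2)--(4); the only type~III link from a $\rho=2$, $K^2=8$ surface contracts the $(-1)$-section of a form of $\F_1$, which is not in the list.

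Two small inaccuracies in your plan: a type~III link does not require a $\ka$-section of $\pi_i$ but rather a contractible $(-1)$-curve, and for $C_1 \times C_2$ (a form of $\F_0$) there simply is none---your Amitsur-kernel argument about sections is beside the point here. Second, elementary transformations need not be centered at degree-$2$ points; allowing arbitrary even degree is necessary for the full link analysis, though degree~$2$ suffices (Lemma~\ref{CBlink}) to exhibit the birational map to $C_1 \times C_3$. Finally, for uniqueness of the $\F_{2k}$-form over $C$ the paper bypasses Galois cohomology with the one-line observation that it is $\Pro_C(\mathcal{O}_C \oplus \mathcal{O}_C(-2k))$.
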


For the case $\rho(X) = 1$ we prove the following theorem.

\begin{theorem}
\label{mainrho1}
Let $\ka$ be a perfect field, and let $X$ be~a~pointless del Pezzo surface of~degree~$8$ such that $\rho(X) = 1$. Then any minimal surface birationally equivalent to~$X$ is~isomorphic to $X$.

\end{theorem}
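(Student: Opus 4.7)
The plan is to combine Theorem \ref{Minclass} with an analysis of the Sarkisov links from $X$, leveraging both the pointlessness of $X$ (which forces Galois orbits on $X$ to have even degree) and the fact that the $(1,1)$-hyperplane sections of $X$ are themselves pointless conics.

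Let $\phi \colon X \dashrightarrow Y$ be a birational map to a minimal surface $Y$, so by Theorem \ref{Minclass} $Y$ is a Mori fiber space. Since $\Pic(X) = \Z \cdot H$ with $H^2 = 2$ and $-K_X = 2H$, the strict transform $\mathcal L_X = \phi^{-1}_{*}|H_Y|$ of an ample linear system on $Y$ is contained in $|nH|$ for some positive integer $n$. Resolving $\phi$ via $\pi \colon \tilde X \to X$ and $\sigma \colon \tilde X \to Y$ with Galois-orbit exceptional divisors $E_i$ of degree $d_i$ and multiplicities $m_i$, intersection theory on $\tilde X$ yields
\[
\sum_i m_i^2 d_i = 2 n^2 - H_Y^2, \qquad \sum_i m_i d_i = 4 n + K_Y \cdot H_Y .
\]
The pointlessness of $X$ forces the Brauer class $[X] \in \operatorname{Br}(\ka)[2]$ to be nontrivial of index $2$, so by Springer's theorem every closed point of $X$ has even residue-degree over $\ka$; hence $d_i \in \{2,4,6\}$ for any base orbit producing a del Pezzo blow-up $X' = \operatorname{Bl}_P X$.

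The heart of the proof is to classify the elementary Sarkisov links starting at $X$. For each admissible $d \in \{2,4,6\}$, I would enumerate the $(-1)$-curves of $\overline{X'}$: the exceptional divisors $E_i$, the strict transforms $f_k - E_i$ of ruling fibers, and the strict transforms of $(a,b)$-divisors through subsets of $P$. Because Galois swaps the two rulings and acts transitively on $\{p_1, \dots, p_d\}$, any Galois orbit containing a fiber transform $f_1 - E_i$ also contains some $f_2 - E_j$ with $i \ne j$; these meet in one point of $X'$, so such orbits cannot be contracted to a smooth surface. The only candidates for a non-trivial divisorial link are therefore Galois orbits of $(a,b)$-divisor transforms, which descend to smooth conics in $X$ cut by hyperplanes.

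For a Type II link that contracts such an orbit to produce a del Pezzo $Z$ with $\rho(Z) = 1$, using $d \in \{2,4,6\}$ together with the intersection formulas above, a direct computation forces $K_Z^2 = 8$, $\overline{Z} \cong \Pro^1_{\kka} \times \Pro^1_{\kka}$, and $Z$ pointless; hence $Z$ is again a pointless del Pezzo of degree $8$ with $\rho(Z)=1$. An explicit description of the Cremona-like transformation $X \dashrightarrow Z$, together with the uniqueness of $\ka$-forms of $\Pro^1 \times \Pro^1$ with a given Brauer class and Galois structure on $\Pic$, then identifies $Z$ with $X$. For Type I links producing a conic bundle $X' \to B$, the total space $X'$ is never minimal in the sense of Definition \ref{minimality}, since the orbit $\{E_1, \dots, E_d\}$ can still be contracted. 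Iterating along the Sarkisov chain, every minimal surface birational to $X$ is isomorphic to $X$. The main obstacle is verifying in each Type II link that $Z$ really is isomorphic to $X$ as a $\ka$-scheme, which requires either an explicit coordinate description of the birational map $X \dashrightarrow Z$ or an invariant-theoretic identification via the Brauer class and the Galois action on $\Pic(\overline{Z})$, being careful to avoid circular use of Theorem \ref{main1}.
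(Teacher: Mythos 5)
Your skeleton (decompose into Sarkisov links, use pointlessness to force even-degree base points, classify the links by degree) matches the paper's strategy, but there are two genuine gaps where the proposal asserts exactly the statements that carry the real difficulty. First, for the type I link you say the resulting conic bundle $X'$ is ``never minimal, since the orbit $\{E_1,\dots,E_d\}$ can still be contracted. Iterating along the Sarkisov chain, every minimal surface birational to $X$ is isomorphic to $X$.'' This is where the paper spends most of its effort (Lemma \ref{DPlink2}): the chain may pass through arbitrarily many type II links between degree-$6$ conic bundles with two degenerate fibres before a type III link contracts a conjugate pair of sections back down to a degree-$8$ del Pezzo surface $X'$, and one must prove that this $X'$ is isomorphic to $X$. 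That is not automatic: the paper's remark following Lemma \ref{DPlink2} exhibits, for $X$ with a $\ka$-point, a chain of exactly this shape whose output is \emph{not} isomorphic to the input, because the hexagon of $(-1)$-curves gets re-glued and the splitting field changes. Ruling this out for pointless $X$ requires the parity computation $E'_1\cdot B'_1=1$ (every intermediate blown-up point has even degree, forcing the section class $R\sim E_1+bB_1+fF$ to have $b$ even, hence $b=0$), and your proposal never carries the even-degree constraint through the intermediate conic bundles — you only use it to constrain the first blowup of $X$ itself.

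Second, you explicitly flag ``verifying in each Type II link that $Z$ really is isomorphic to $X$'' as the main obstacle and offer two possible routes without completing either; the paper closes this via Theorem \ref{BBreg}, showing the pair $\left(\LL,\Am(X_{\LL})\right)$ is a biregular invariant that determines $X$, and then checking (by tracking the Galois action on the $(-1)$-curves of the intermediate del Pezzo surface of degree $4$, respectively of the hexagon) that the splitting field $\LL$ is preserved. A smaller but real error: $\Pic(X)=\mathbb{Z}\cdot H$ with $H^2=2$ and $-K_X=2H$ holds only when $\Am(X)$ is trivial, i.e.\ when $X$ is a quadric in $\Pro^3_{\ka}$; in case (2) of Proposition \ref{ClassDP} the generator of $\Pic(X)$ is $-K_X$ itself, so your intersection-theoretic bookkeeping needs a case distinction. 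As written, the proposal is a correct outline of the easy half of the argument with the two hard steps left as assertions.
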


A geometrically rational surface $X$ admitting a structure of a conic bundle is called a~\textit{relatively minimal conic bundle} if the Picard number of this surface is $2$.

Let $X$ be a minimal del Pezzo surface with $\rho(X) = 1$ or a relatively minimal conic bundle. The surface $X$ is called \textit{birationally rigid} if for any birational \mbox{map $X \dashrightarrow X'$,} where $X'$ is a minimal del Pezzo surface $X'$ with $\rho(X') = 1$ or a relatively minimal conic bundle, one has $X' \cong X$ (for the precise definition of~birational rigidity in general case and some properties see e.g. \cite{Ch05}).

In Section $4$ we show that in general case a pointless del Pezzo surface $X$ of degree~$8$ with $\rho(X) = 1$ is birationally equivalent to a (non-minimal) del Pezzo surface of degree~$6$ admitting a structure of relatively minimal conic bundle. For a given surface $X$ these surfaces are parametrised by quadratic extensions $F/\ka$ such that $X_F(F) \neq \varnothing$. Therefore~$X$ is not birationally rigid.

Let us recall that the \textit{index} $I(V)$ of a variety $V$ is the greatest common divisor of~the~degrees of closed points on $V$. We give a description of birationally rigid del Pezzo surfaces of~degree~$8$ in the following theorem.

\begin{theorem}
\label{rigidity}
Let $\ka$ be a perfect field, and let $X$ be a del Pezzo surface of degree $8$. Then the following assertions are equivalent.

\begin{itemize}
\item[(a)] One has $I(X) = 4$.

\item[(b)] There are no points of degree $2$ on $X$.

\item[(c)] The Amitsur subgroup $\Am(X) \subset \operatorname{Br}(\ka)$ (see Definition \ref{Amitsur} or \cite[Definition~2.8]{Lied17}) contains an element that does not correspond to a conic.

\item[(d)] The surface $X$ is birationally rigid.

\end{itemize}

\end{theorem}

Actually for any pointless del Pezzo surface $X$ of degree $8$ we describe all possible minimal del Pezzo surfaces and relatively minimal conic bundles birationally equivalent to $X$. Note that if $X$ is a minimal del Pezzo surface $X$ with $\rho(X) = 1$ of degree $1$, $2$ or~$3$, or a pointless del Pezzo surface of degree $4$, then $X$ is birationally rigid (see \cite[Theorems 4.4 and 4.5]{Isk96}). On the other hand by \cite[Chapter 4]{Isk96} if $X$ is a del Pezzo surface of degree at least $5$ with $\rho(X) = 1$ and $X(\ka) \neq \varnothing$  or $X \rightarrow \Pro^1_{\ka}$ is a conic bundle with~$\rho(X) = 2$, $X(\ka) \neq \varnothing$ and $K_X^2 \geqslant 5$ then $X$ is $\ka$-rational. In particular, all these surfaces are birationally equivalent to each other.

A non-trivial Severi--Brauer surface $X$ (i.e. a pointless del Pezzo surface of degree~$9$) is not birationally equivalent to any conic bundle, and if a minimal del Pezzo surface~$X'$ is birationally equivalent to $X$ then either $X' \cong X$, or $X' \cong X^{\mathrm{op}}$, where $X^{\mathrm{op}}$ \mbox{is the Severi--Brauer} surface such that the central simple algebras corresponding to $X$ and $X^{\mathrm{op}}$ are opposite (see \cite[Corollary 2.4 and Theorem 2.10]{Sh20} and \cite{Wei22}).

Note that a geometrically rational surface $X$ with $K_X^2 = 7$ is never minimal by \cite[Theorem 4]{Isk79}, and a del Pezzo surface of degree $5$ always has a $\ka$-point (see \cite{SD72}).

An interesting problem is to describe all minimal surfaces birationally equivalent to~a~given del Pezzo surface $X$ with $\rho(X) = 1$, or a conic bundle $X \rightarrow C$ with $\rho(X) = 2$. The cases of pointless del Pezzo surfaces of degree $8$ and pointless conic bundles with~$0$ or~$2$ degenerate fibres are considered in this paper. Therefore the remaining cases are conic bundles with at least $4$ degenerate fibres, pointless del Pezzo surfaces of degree $6$ and del Pezzo surfaces of degree $4$ with $\ka$-points. One of the particular questions is whether there exist two birationally equivalent del Pezzo surfaces $X$ and $X'$ of degree $4$, such that $X$ and $X'$ are not isomorphic. Note that some useful results about birational classification of minimal conic bundles and minimal del Pezzo surfaces of degree $4$ with $\ka$-points are obtained in \cite[Theorem 2.5 and Corollary 3.3]{Sk86}.

The plan of this paper is as follows. In Section $2$ we recall some notions and properties of pointless del Pezzo surfaces of degree $8$ and give a definition of Amitsur \mbox{subgroup $\Am(X) \subset \operatorname{Br}\left(\ka\right)$} (see Definition \ref{Amitsur}), that is a birational invariant. Also we give a description of~all~geometrically rational surfaces with non-trivial $\Am(X)$, and show how one can restore a del Pezzo surface $X$ of~degree $8$ with~$\rho(X) = 1$ by~$\Am(X_{\LL})$, where~$\LL$ is a~\textit{splitting field} of $X$: unique quadratic extension of~$\ka$ such that $\rho\left(X_{\LL}\right) = 2$.

In Section $3$ we consider pointless del Pezzo surfaces of degree $8$ with the Picard number~$2$. Any such surface $X$ is isomorphic to a product of two conics. We consider Sarkisov links for these surfaces, describe possibilities for minimal surfaces birationally equivalent to $X$, and show that any other surface is not birationally equivalent to $X$. As a result of~this section we prove Theorem \ref{mainrho2}.

In Section $4$ we consider pointless del Pezzo surfaces of degree $8$ with the Picard number~$1$. We show that for any such surface $X$ any Sarkisov link or sequence of Sarkisov links leads to an isomorphic surface or a certain non-minimal del Pezzo surface of degree~$6$ obtaining a relatively minimal conic bundle structure. Finally, we prove Theorems \ref{mainrho1} and \ref{rigidity}, and give an alternative proof of Theorem \ref{main1} for the case of a perfect field.

\smallskip
\textbf{Acknowledgements.}
The author is grateful to Sergey Gorchinskiy for his advice to~use the~group $\Am(X)$ for studying the birational classification of del Pezzo surfaces of~degree $8$. The author is grateful to Costya Shramov for numerous discussions during the~preparation and writing of this paper. Also the author thanks \mbox{Jean-Louis\,Colliot-Th\'el\`ene} for many useful discussions, that were helpful to improve the~results of this paper. Also the author would like to thank the reviewer of this paper for useful comments.
\begin{notation}

Throughout this paper $\ka$ is a perfect field, $\kka$ is its algebraic closure, and~the~Galois group $\Gal\left( \kka / \ka\right)$ is denoted by $G_{\ka}$. For a surface $X$ we denote~$X \otimes \kka$ by $\XX$. For a surface $X$ we denote the Picard group  by $\Pic(X)$. \mbox{The number $\rho(X) = \operatorname{rk} \Pic(X)$} is the Picard number of $X$. If two surfaces $X$ and $Y$ are birationally equivalent then we write~$X \approx Y$. If two divisors $A$ and~$B$ are linearly equivalent then we write~$A \sim B$. The rational ruled (Hirzebruch) surface $\Pro_{\Pro^1_{\kka}}\left( \mathcal{O} \oplus \mathcal{O}(n) \right)$ is denoted by $\F_n$. For a given quadratic extension $\LL/\ka$ and a conic $C$ over $\LL$ we denote by $R_{\LL/\ka} C$ its Weil restriction of~scalars.

\end{notation}

\section{Preliminaries}

In this section we review some results about biregular classification of del Pezzo surfaces of degree $8$, and give an alternative description of this classification in terms of the Brauer group.

Throughout this section we assume that $X$ is a del Pezzo surface of degree $8$ over a~perfect field $\ka$, such that $\XX \cong \Pro^1_{\kka} \times \Pro^1_{\kka}$. The following lemma gives biregular classification of such surfaces.

\begin{lemma}[cf. {\cite[Lemma 3.4(i and ii)]{ShV18}}]
\label{Class1}
Let $\ka$ be a perfect field, and~let~$X$ be~a~del Pezzo surface of degree $8$, such that $\XX \cong \Pro^1_{\kka} \times \Pro^1_{\kka}$. The following assertion hold.

\begin{enumerate}

\item[(i)] The surface $X$ is isomorphic to the product $C_1 \times C_2$ of two conics over $\ka$, or~to~$R_{\LL/\ka} C$, where $\LL$ is a quadratic extension of $\ka$ and $C$ is a conic over $\LL$. In the former case one has $\rho(X) = 2$, while in the latter $\rho(X) = 1$. Furthermore, in the former case the (non-ordered) pair of conics $\{C_1, C_2\}$ is uniquely determined by $X$; in the latter case the extension $\LL / \ka$ and the conic $C$ are uniquely determined by $X$ up to conjugation by the Galois group $\Gal\left(\LL / \ka\right)$.

\item[(ii)] The surface $X$ is isomorphic to a quadric in $\Pro^3_{\ka}$ if and only if $X \cong C \times C$ \mbox{or $X \cong R_{\LL/\ka} N_{\LL}$,} for some conic $N$ over $\ka$.

\end{enumerate}

\end{lemma}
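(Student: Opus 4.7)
The plan is to study the Galois action on the geometric Picard group. Since $\overline{X} \cong \Pro^1_{\kka} \times \Pro^1_{\kka}$, we have $\Pic(\overline{X}) = \mathbb{Z}F_1 \oplus \mathbb{Z}F_2$, where $F_i$ are the classes of fibers of the two projections. The group $G_{\ka}$ preserves the intersection pairing and the anticanonical class $-K_{\overline{X}} = 2F_1+2F_2$; since $F_1$ and $F_2$ are the only two classes of square zero and intersection one with $F_1+F_2$, any element of $G_{\ka}$ either fixes $F_1$ and $F_2$ or swaps them. Hence $\rho(X) \in \{1,2\}$, and the index-two subgroup of $G_{\ka}$ that fixes each $F_i$ separately corresponds to a (possibly trivial) quadratic extension $\LL/\ka$.

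In the case $\rho(X)=2$, each $F_i$ is Galois-invariant, so the pencil $|F_i|$ descends to a morphism $\pi_i\colon X \to C_i$, whose base $C_i$ is a smooth rational $\ka$-curve of degree $2$ in the anticanonical embedding, i.e.\ a smooth conic over $\ka$. The product map $(\pi_1,\pi_2)\colon X \to C_1\times C_2$ is birational between two smooth projective surfaces with $K^2=8$ and $\rho=2$, hence an isomorphism. The two pencils $|F_1|, |F_2|$ are intrinsically determined by $X$ up to reordering, so the unordered pair $\{C_1,C_2\}$ is uniquely determined.

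In the case $\rho(X)=1$, applying the previous case to $X_{\LL}$ yields $X_{\LL}\cong C_1\times C_2$ for conics $C_1,C_2$ over $\LL$. The nontrivial element $\sigma\in\Gal(\LL/\ka)$ swaps the two rulings and hence the two factors, so $C_2 \cong \sigma^{*}C_1$, and the descent datum identifies $X$ with the Weil restriction $R_{\LL/\ka} C_1$. The field $\LL$ is recovered as the fixed field of the kernel of the $G_{\ka}$-action on $\Pic(\overline{X})$, and $C$ is recovered, up to $\Gal(\LL/\ka)$-conjugation, as either factor of $X_{\LL}$.

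For (ii), $X$ is a quadric in $\Pro^3_{\ka}$ iff $-K_X$ is divisible by two in $\Pic(X)$ with a half $H$ satisfying $H^2=2$ and $h^0(H)=4$; geometrically such an $H$ must equal $F_1+F_2$. The main task is then to decide when the class $F_1+F_2\in\Pic(\overline{X})^{G_{\ka}}$ lifts to an actual element of $\Pic(X)$. If $X\cong C\times C$, the diagonal $\Delta$ is a $\ka$-divisor of class $F_1+F_2$, and the associated embedding realises $X$ as a quadric; similarly, if $X \cong R_{\LL/\ka} N_{\LL}$ for a conic $N$ over $\ka$, pulling back a line from $N\subset\Pro^2_{\ka}$ along $X\to R_{\LL/\ka}\Pro^2_{\LL}\to\Pro^2_{\ka}$ (via the norm/trace map on projective space) produces the required line bundle. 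Conversely, if $X\cong C_1\times C_2$ with $C_1\not\cong C_2$, or $X\cong R_{\LL/\ka}C$ with $C$ not of the form $N_{\LL}$, one shows that the obstruction in $\mathrm{Br}(\ka)$ (the cup product of the Brauer classes of the conic factors, resp.\ the class controlling descent of $C$) is nonzero, so $F_1+F_2$ does not lift. The anticipated main obstacle is this last step: making the Brauer-theoretic obstruction computation completely precise and matching it with the classes $[C_1],[C_2],[C]\in\mathrm{Br}(\ka)$; all the earlier steps are structural consequences of the Galois-module structure on $\Pic(\overline{X})$.
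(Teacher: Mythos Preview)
The paper does not give its own proof of this lemma; it is quoted from \cite{ShV18} and used as a black box. (The paper does, in the proof of Proposition~\ref{ClassCB}, reproduce the easy equivalence ``$F_1+F_2\in\Pic(X)\Longleftrightarrow X$ is a quadric in $\Pro^3_\ka$'' by the same linear-system/hyperplane-section argument you indicate, but then invokes Lemma~\ref{Class1}(ii) to finish.) So there is no paper-proof to compare against; let me just comment on your sketch.

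Your treatment of (i) is the standard descent argument and is fine. For (ii), two points deserve correction.

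First, in the $\rho(X)=2$ case the Brauer obstruction to lifting $F_1+F_2$ is the \emph{sum} $b(C_1)+b(C_2)\in\operatorname{Br}(\ka)$, not a cup product: the boundary map $\Pic(\overline X)^{G_\ka}\to\operatorname{Br}(\ka)$ in Proposition~\ref{Brauer} is a group homomorphism, and it sends $F_i\mapsto b(C_i)$. Since conic classes are $2$-torsion, $b(C_1)+b(C_2)=0$ iff $C_1\cong C_2$, which is exactly what you want.

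Second, in the $\rho(X)=1$ case your ``norm/trace map on projective space'' $R_{\LL/\ka}\Pro^2_\LL\to\Pro^2_\ka$ does not exist in general, so that construction does not produce the half-anticanonical class. Two clean replacements: (a) use the unit of the adjunction $N\hookrightarrow R_{\LL/\ka}(N_\LL)=X$, which over $\LL$ is the diagonal $N_\LL\hookrightarrow N_\LL\times N_\LL$ and hence has class $F_1+F_2$; or (b) argue the converse directly: if $X$ is a quadric, a generic hyperplane section is a smooth conic $N$ over $\ka$, and over $\LL$ the projection $N_\LL\subset C\times C'\to C$ of this $(1,1)$-divisor is an isomorphism, so $C\cong N_\LL$. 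Either of these removes the vague ``class controlling descent of $C$'' and makes the $\rho=1$ part of (ii) as clean as the $\rho=2$ part.
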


\begin{corollary}
\label{Class2}
Let $\ka$ be a perfect field, and let $X$ be a del Pezzo surface of~degree $8$ with~${\rho(X) = 1}$. In this case $X \cong R_{\LL/\ka} C$. Let $C'$ be a $\Gal\left(\LL / \ka\right)$-conjugate conic of $C$. Then $X_{\LL} \cong C \times C'$. In particular, $X$ is pointless if and only if $X_{\LL}$ is pointless.
\end{corollary}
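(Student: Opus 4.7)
The plan is to exploit the Weil-restriction form of $X$ provided by Lemma \ref{Class1}(i), compute the base change $X_{\LL}$ via the standard decomposition, and then read off the ``pointless iff pointless'' equivalence from the universal property of Weil restriction.

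Since $\rho(X) = 1$, Lemma \ref{Class1}(i) supplies a (unique) quadratic extension $\LL/\ka$ and, up to Galois conjugation, a unique conic $C/\LL$ such that $X \cong R_{\LL/\ka} C$. Write $\sigma$ for the nontrivial element of $\Gal(\LL/\ka)$ and set $C' = C^{\sigma}$. I would then invoke the general identity
\[
\bigl(R_{\LL/\ka} Y\bigr) \otimes_{\ka} \LL \;\cong\; Y \times_{\LL} Y^{\sigma},
\]
valid for any quasi-projective $\LL$-scheme $Y$; specialising to $Y = C$ gives the first assertion $X_{\LL} \cong C \times C'$. This identity is standard and can be verified either by descent data or by a direct functor-of-points computation, so it should not be the main obstacle.

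For the ``in particular'' clause, the universal property of Weil restriction yields $X(\ka) = C(\LL)$, while by the preceding paragraph $X_{\LL}(\LL) = C(\LL) \times C'(\LL)$. Since $C' = C^{\sigma}$, Galois conjugation supplies a bijection between $C(\LL)$ and $C'(\LL)$, so the product is empty exactly when $C(\LL)$ is empty. Combining these facts, $X(\ka) = \varnothing$ if and only if $X_{\LL}(\LL) = \varnothing$, as desired. I do not anticipate any genuine obstacle here; essentially all the real content of the statement has already been encoded in Lemma \ref{Class1}(i), and the remaining work amounts to unpacking the definition of $R_{\LL/\ka}$ and noting that Galois conjugation preserves emptiness of the $\LL$-point set.
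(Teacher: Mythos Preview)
Your argument is correct. The paper states this corollary without proof, treating it as immediate from Lemma~\ref{Class1}(i); your proposal simply makes explicit the standard Weil-restriction identities (base change $(R_{\LL/\ka} C)_{\LL} \cong C \times C^{\sigma}$ and the universal property $X(\ka) = C(\LL)$) that the reader is expected to supply, so there is no discrepancy in approach.
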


For the both cases $\rho(X) = 2$ and $\rho(X) = 1$ we want to give an interpretation of~Lemma~\ref{Class1} in terms of the Brauer group~$\operatorname{Br}\left(\ka\right)$ (for the definition of the Brauer group and properties see, for example \cite[Chapter 3]{GS18}). Let us recall some facts about this group.

\begin{proposition}[{\cite[Proposition 5.1]{CTKM08}}]
\label{Brauer}
Let $V$ be a smooth projective geometrically irreducible variety over $\ka$. Then there exists an exact sequence
$$
0 \rightarrow \Pic(V) \rightarrow \Pic(\overline{V})^{G_{\ka}} \rightarrow \operatorname{Br}\left(\ka\right) \rightarrow \operatorname{Br}\left(\ka(V)\right).
$$   
\end{proposition}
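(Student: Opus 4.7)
The plan is to derive the sequence from the Hochschild--Serre (Leray) spectral sequence for étale cohomology of the sheaf $\mathbb{G}_m$ on $V$, applied to the base change $\overline{V} \to V$. Concretely, I would invoke the spectral sequence
\[
E_2^{p,q} = H^p\bigl(G_{\ka}, H^q_{\text{ét}}(\overline{V}, \mathbb{G}_m)\bigr) \Longrightarrow H^{p+q}_{\text{ét}}(V, \mathbb{G}_m),
\]
and extract its five-term exact sequence of low-degree terms:
\[
0 \to H^1(G_{\ka}, H^0(\overline{V}, \mathbb{G}_m)) \to H^1(V, \mathbb{G}_m) \to H^1(\overline{V}, \mathbb{G}_m)^{G_{\ka}} \to H^2(G_{\ka}, H^0(\overline{V}, \mathbb{G}_m)) \to H^2(V, \mathbb{G}_m).
\]

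Next I would identify each term with a classical object. Since $V$ is projective and geometrically irreducible, $\overline{V}$ is connected and proper, so $H^0(\overline{V}, \mathbb{G}_m) = \kka^{*}$. By Hilbert 90, $H^1(G_{\ka}, \kka^{*}) = 0$, which accounts for the leading zero in the target sequence. By definition $H^2(G_{\ka}, \kka^{*}) = \operatorname{Br}(\ka)$, and the étale-cohomological description of the Picard group yields $H^1(V, \mathbb{G}_m) = \Pic(V)$ and $H^1(\overline{V}, \mathbb{G}_m) = \Pic(\XX)$ (writing $\XX = \overline{V}$ in the paper's notation). Substituting gives exactly
\[
0 \to \Pic(V) \to \Pic(\overline{V})^{G_{\ka}} \to \operatorname{Br}(\ka) \to \operatorname{Br}(V),
\]
where $\operatorname{Br}(V) = H^2_{\text{ét}}(V, \mathbb{G}_m)$ is the cohomological Brauer group of $V$.

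The last step is to replace the final term by $\operatorname{Br}(\ka(V))$. For this I would use the fact that on a smooth variety over a field of characteristic zero, the natural map $\operatorname{Br}(V) \to \operatorname{Br}(\ka(V))$ (induced by the generic point) is injective; this is a consequence of the Auslander--Goldman theorem (Brauer classes are unramified over a regular scheme) or, equivalently, of the purity of the Brauer group for smooth surfaces/higher-dimensional smooth varieties. Composing the penultimate arrow with this injection preserves exactness at $\operatorname{Br}(\ka)$ and produces the stated sequence.

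The routine piece is the bookkeeping in the spectral sequence; the only genuine input one must cite is the injectivity $\operatorname{Br}(V) \hookrightarrow \operatorname{Br}(\ka(V))$, which is where smoothness of $V$ is used (together with Hilbert 90 and properness to pin down $H^0(\overline{V}, \mathbb{G}_m)$). Since these are standard results from \cite{GS18}, the proof amounts essentially to assembling them, which matches the fact that the paper cites this as an exercise.
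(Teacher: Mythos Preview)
Your argument is correct and is precisely the standard derivation of this exact sequence: the low-degree terms of the Hochschild--Serre spectral sequence for $\mathbb{G}_m$, combined with Hilbert~90 to kill $H^1(G_{\ka},\kka^{*})$ and with the injectivity $\operatorname{Br}(V)\hookrightarrow\operatorname{Br}(\ka(V))$ for smooth $V$ to pass to the function field. There is nothing to compare against, since the paper does not supply its own proof: the proposition is stated with a reference to \cite[Exercise~3.3.5]{GS18} and used as a black box, so what you have written is exactly the intended solution of that exercise.
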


\begin{definition}[see {\cite[Definition 2.8]{Lied17}}]
\label{Amitsur}
Let $V$ be a smooth projective geometrically irreducible variety over $\ka$. The group 
$$
\Am(V) = \Pic(\overline{V})^{G_{\ka}} / \Pic(V) \subset \operatorname{Br}\left(\ka\right)
$$
is called \textit{Amitsur subgroup} of $V$ in $\operatorname{Br}\left(\ka\right)$.
\end{definition}

Note that~$\Am(V)$ is~a~birational invariant since it is the kernel \mbox{of the map $\operatorname{Br}\left(\ka\right) \rightarrow \operatorname{Br}\left(\ka(V)\right)$} (see \cite[Proposition 2.10]{Lied17}).

We want to describe minimal geometrically rational surfaces $S$ with non-trivial $\Am(S)$. These surfaces are described in \cite[Proposition 5.3]{CTKM08}, but we obtain a more detailed description. By~Theorem~\ref{Minclass} the surface $S$ is either a del Pezzo surface with $\rho(S) = 1$, or~admits a~conic bundle structure over a smooth conic with $\rho(S) = 2$.

Let us start from the case of conic bundle. We say that a conic is \textit{non-trivial} if it is not isomorphic to $\Pro^1_{\ka}$. Note that for each conic $C$ we can define a \textit{class} $b(C)$, that is an element in $2$-torsion subgroup of the Brauer group $\operatorname{Br}\left(\ka\right)$ (see, for example, \cite[Section 3.3]{GS18}). If two conics are not isomorphic then they have different classes, and $b(C)$ is trivial if and only if $C$ is trivial. Moreover, for a conic $C$ the class $b(C)$ is~a~generator of $\Am(C)$.

If for a smooth projective geometrically irreducible variety $V$ there exists \mbox{a map $V \dashrightarrow C$,} where $C$ is a smooth conic, then this map induces embeddings \mbox{$\Pic(C) \hookrightarrow \Pic(V)$} and $\Pic(\overline{C})^{G_{\ka}} \hookrightarrow \Pic(\overline{V})^{G_{\ka}}$, since we can consider the preimage of~a~general geometric point on $C$. Therefore $\Am(C) \subset \Am(V)$. In particular, if $C$ is non-trivial then $\Am(V)$ is non-trivial, since it contains $b(C)$.

Now we can describe conic bundles $S \rightarrow C$ with non-trivial $\Am(S)$.

\begin{proposition}
\label{ClassCB}

Let $S$ be a surface admitting a conic bundle structure $S \rightarrow C$ over a~smooth conic $C$ over a perfect field $\ka$, such that $\rho(S) = 2$ and $\Am(S)$ is~non-trivial. Then there are five possibilities.

\begin{enumerate}

\item The surface $S$ is isomorphic to $C_1 \times C_2$, where $C_1$ and $C_2$ are two non-trivial smooth conics not isomorphic to each other. The group $\Am(S) \cong \left(\mathbb{Z} / 2\mathbb{Z}\right)^2$ is~generated by~$b(C_1)$ and $b(C_2)$.

\item The surface $S$ is isomorphic to $C \times C$, where $C$ is a non-trivial smooth conic. The~group $\Am(S) \cong \mathbb{Z} / 2\mathbb{Z}$ is generated by $b(C)$.

\item The surface $S$ is isomorphic to $C \times \Pro^1_{\ka}$, where $C$ is a non-trivial smooth conic. The group $\Am(S) \cong \mathbb{Z} / 2\mathbb{Z}$ is generated by $b(C)$.

\item The surface $S$ is a $\ka$-form of Hirzebruch surface $\mathbb{F}_{2k}$ admitting a conic bundle structure over a smooth non-trivial conic $C$, where $k$ is a positive integer. \mbox{The group $\Am(S) \cong \mathbb{Z} / 2\mathbb{Z}$} is generated by $b(C)$.

\item The surface $S$ admits a conic bundle structure over a smooth non-trivial conic $C$ with $2k$ degenerate geometric fibres, where $k$ is a positive integer. \mbox{The group $\Am(S) \cong \mathbb{Z} / 2\mathbb{Z}$} is generated by $b(C)$.

\end{enumerate}

\end{proposition}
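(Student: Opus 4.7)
The plan is to exploit the structure of $\overline S$ as a smooth $\Pro^1$-fibration over $\overline C\cong\Pro^1_{\kka}$, combined with the exact sequence of Proposition~\ref{Brauer}. Let $\pi\colon S\to C$ be the given conic bundle and let $m$ denote the number of reducible geometric fibres of $\pi$; then $\rho(\overline S)=2+m$, and by contracting one component of each reducible fibre $\overline S$ becomes a Hirzebruch surface $\F_n$ for some $n\ge 0$. The hypothesis $\rho(S)=2$ forces the $G_{\ka}$-invariants of $\Pic(\overline S)$ to have rank $2$, and I would split cases on $m$ and $n$.

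Suppose first $m=0$, so $\overline S\cong\F_n$. If $n=0$, then $S$ is a del Pezzo surface of degree $8$ with $\rho(S)=2$, and Lemma~\ref{Class1} gives $S\cong C_1\times C_2$ for two conics over $\ka$. The hypothesis $\Am(S)\ne 0$ rules out $C_1\cong C_2\cong\Pro^1_{\ka}$; according to whether the two conics are non-trivial and non-isomorphic, isomorphic and non-trivial, or one trivial and one non-trivial, we obtain cases (1), (2) and (3) respectively. In each subcase a direct computation of $\Pic(S)$ as a sublattice of $\Pic(\overline S)=\Z[F_1]\oplus\Z[F_2]$, using the pullbacks $\pi_i^{\ast}\Pic(C_i)\hookrightarrow\Pic(S)$ together with the diagonal class $[F_1]+[F_2]$ in case~(2), combined with the fact that the connecting map $\Pic(\overline S)^{G_{\ka}}\to\operatorname{Br}(\ka)$ of Proposition~\ref{Brauer} sends $[F_i]$ to $b(C_i)$, yields the stated Amitsur groups and generators.

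If instead $n\ge 1$, the $(-n)$-curve $s\subset\overline S$ is the unique curve of negative self-intersection, hence $G_{\ka}$-invariant, and $[s]\in\Pic(S)$. In the basis $[s],[F]$ one has $-K_S\sim 2[s]+(n+2)[F]$, so $(n+2)[F]\in\Pic(S)$; together with $\pi^{\ast}(-K_C)=2[F]\in\Pic(S)$ this forces $[F]\in\Pic(S)$ whenever $n$ is odd, yielding $\Pic(S)=\Pic(\overline S)$ and $\Am(S)=0$, a contradiction. Thus $n=2k$; the conic $C$ must be non-trivial (else $b(C)=0$ and again $\Am(S)=0$), $\Pic(S)=\Z[s]+2\Z[F]$, and $\Am(S)\cong\Z/2\Z$ is generated by the image $b(C)$ of $[F]$. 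This is case~(4).

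Now assume $m\ge 1$, so $\pi$ has a non-empty $\ka$-rational discriminant divisor $\Delta\subset C$ of degree $m$. I would first argue that $C$ is non-trivial: otherwise $C\cong\Pro^1_{\ka}$ already gives $[F]\in\Pic(S)$, and an analysis of the Galois action on the components of reducible fibres -- each pair $E_i,E_i'$ satisfying $E_i+E_i'\sim F$, with the rank-$m$ span of the $E_i$'s having only multiples of $[F]$ as Galois invariants because of $\rho(S)=2$ -- shows $\Pic(S)=\Pic(\overline S)^{G_{\ka}}$, contradicting $\Am(S)\ne0$. Given that $C$ is non-trivial, Springer's theorem implies that every closed point of the pointless conic $C$ has even degree, so $m=\deg\Delta=2k$ for some $k\ge 1$, placing us in case~(5). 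Tracking $[F]$ modulo $\Pic(S)$ exactly as in the $\F_{2k}$ argument then shows that $\Am(C)\hookrightarrow\Am(S)$ is an isomorphism, so $\Am(S)\cong\Z/2\Z$ is generated by $b(C)$. The main obstacle is this last case: one must analyze the Galois action on the rank-$m$ sublattice of $\Pic(\overline S)$ spanned by the components of reducible fibres carefully enough to verify both that $C$ cannot be $\Pro^1_{\ka}$, and that, once $C$ is non-trivial, no class beyond the image of $b(C)$ contributes to $\Am(S)$.
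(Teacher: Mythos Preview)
Your outline is correct and follows essentially the same case division as the paper: split according to the number $m$ of reducible geometric fibres, and when $m=0$ further according to whether $\overline S\cong\F_0$ or $\F_n$ with $n\ge 1$. Your treatment of the $m=0$ cases is equivalent to the paper's, with only a cosmetic difference in the $\F_0$ subcase: you invoke Lemma~\ref{Class1} first to write $S\cong C_1\times C_2$ and then split on the isomorphism type of the pair, whereas the paper first analyzes which of $A$, $B$, $A+B$ lie in $\Pic(S)$ and then invokes Lemma~\ref{Class1} to recognize the $A+B\in\Pic(S)$ case as a quadric $C\times C$. Both work; your use of the diagonal in $C\times C$ and of the functoriality of the connecting map under $\pi_i^{\ast}$ is exactly what is needed.

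The one place where the paper is more efficient is case $m\ge 1$, which you correctly flag as the main obstacle. Rather than a direct analysis of the Galois action on the $2m$ components $E_i,E_i'$ of the reducible fibres, the paper observes that for any component $E$ one has $(-K_{\overline S})\cdot E=1$ and $F\cdot E=0$; since $\Pic(\overline S)^{G_{\ka}}$ has rank~$2$ and contains $-K_{\overline S}$ and $F$, this intersection computation forces $\Pic(\overline S)^{G_{\ka}}=\langle -K_{\overline S},\,F\rangle$ outright. From there everything is immediate: $\Pic(S)$ always contains $-K_S$ and $2F$, so $\Am(S)$ is cyclic of order at most~$2$, generated by the image $b(C)$ of $F$; it is non-trivial precisely when $F\notin\Pic(S)$, i.e.\ when $C$ is non-trivial; and an odd $m$ would produce an odd-degree point on $C$, forcing $C\cong\Pro^1_{\ka}$. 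This single intersection trick replaces both the contradiction argument you sketch for $C\cong\Pro^1_{\ka}$ and the component-lattice analysis you anticipate for $C$ non-trivial, and it avoids any appeal to Springer's theorem.
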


\begin{proof}

Note that $\rho\left(\overline{S}\right)^{G_{\ka}} = \rho\left(S\right) = 2$. Therefore $\Pic\left(S\right)$ is a sublattice of $\Pic\left(\overline{S}\right)^{G_{\ka}}$ of~finite index.

We start from the case, when the conic bundle $S \rightarrow C$ has a degenerate geometric fibre. Let us show that $\Pic\left(\overline{S}\right)^{G_{\ka}}$ is generated by $-K_{\overline{S}}$ and the class $F$ of a geometric fibre. Otherwise there exists a class $D$ in $\Pic\left(\overline{S}\right)^{G_{\ka}}$, such that $mD \sim -K_{\overline{S}} + nF$. Let~$E$ be~an~irreducible component of a geometric degenerate fibre. Then $E$ is a $(-1)$-curve, and one has $-K_{\overline{S}} \cdot E = 1$ and $F \cdot E = 0$. Therefore
$$
m(D \cdot E) = mD \cdot E = \left( -K_{\overline{S}} + nF \right) \cdot E = 1.
$$
It is possible if and only if $m = \pm 1$, but in this case $D$ lies in the lattice generated by~$-K_{\overline{S}}$ and $F$.

Note that $\Pic\left(S\right)$ contains $-K_S$ and $2F$, since there is a point of degree $2$ on $C$, and~a~fibre over this point has class $2F$. Therefore $\Am(S)$ is non-trivial if and only if~$\Pic\left(S\right)$ does not contain $F$, and for this case $\Am(S) \cong \mathbb{Z} / 2\mathbb{Z}$ is generated by $b(C)$. This is possible if and only if $C$ is a non-trivial conic. Also note that if $S \rightarrow C$ has degenerate fibres over odd number of geometric points, then $C \cong \Pro^1_{\ka}$, since there is a point of odd degree on $C$.

Now assume that the conic bundle $S \rightarrow C$ does not have degenerate geometric fibres. Then either $\overline{S} \cong \Pro^1_{\kka} \times \Pro^1_{\kka}$, or $S$ is a $\ka$-form of a Hirzebruch surface $\mathbb{F}_m$, \mbox{where $m \geqslant 1$.} In~the~latter case there is a unique section $H$ of $S \rightarrow C$ such that $H^2 = -m$. \mbox{The group $\Pic\left(\overline{S}\right)^{G_{\ka}} = \Pic\left(\overline{S}\right)$} is generated by the class of $H$ and a class $F$ of a geometric fibre. The group $\Pic\left(S\right)$ contains $2F$ and the class of $H$, since $H$ is unique and~therefore defined over $\ka$. Therefore $\Am(S)$ is non-trivial if and only if $\Pic\left(S\right)$ does not contain $F$, and for this case one has $\Am(S) \cong \mathbb{Z} / 2\mathbb{Z}$. This is possible if and only if $C$ is a non-trivial conic. Also note that $-K_S \sim 2H + (m+2)F$, therefore $mF \in \Pic\left(S\right)$, and if $\Am(S)$ is~non-trivial then $m$ is even.  

Now assume that $S \cong C_1 \times C_2$ and $\overline{S} \cong \Pro^1_{\kka} \times \Pro^1_{\kka}$. Note that $\Pic\left( \Pro^1_{\kka} \times \Pro^1_{\kka} \right)$ is generated by the classes $A$ and $B$ of fibres of the projections on the first and the second factors of~$\Pro^1_{\kka} \times \Pro^1_{\kka}$. If $\rho(S) = 2$ then $\Pic(S)$ contains $2A$ and $2B$. Therefore if $\Am(S)$ is non-trivial then $\Pic(S)$ is $\langle 2A, 2B \rangle$, $\langle 2A, A+B \rangle$, $\langle 2A, B \rangle$ or $\langle A, 2B \rangle$.

If $\Pic(S)$ contains $A$ or $B$ then $C_1$ or $C_2$ respectively is isomorphic to $\Pro^1_{\ka}$, and $S \cong C \times \Pro^1_{\ka}$, where $C$ is a non-trivial smooth conic. For this case $\Am(S) \cong \mathbb{Z} / 2\mathbb{Z}$ is generated by $b(C)$.

Let us show that $\Pic(S)$ contains $A + B$ if and only if $S$ is isomorphic to a quadric in~$\Pro^3_{\ka}$. If $A + B$ lies in $\Pic(S)$, then the linear system $|A+B|$ defines \mbox{an embedding $S \hookrightarrow \Pro^3_{\ka}$} and~the~image is a quadric surface. Conversely, if $S$ is isomorphic to a quadric in $\Pro^3_{\ka}$ then any hyperplane section of this quadric has class $A + B$ in $\Pic(S)$. Therefore for this case we can apply Lemma \ref{Class1} and get that $S$ is isomorphic to $C \times C$, where $C$ is a smooth conic over $\ka$. If $\Am(S)$ is non-trivial then $C$ is non-trivial, and for this case $\Am(S) \cong \mathbb{Z} / 2\mathbb{Z}$ is~generated by $b(C)$.

If $\Pic(S)$ does not contain $A$, $B$ and $A + B$, then $C_1$ and $C_2$ are non-trivial and~not~isomorphic to each other. For this case $\Am(S) \cong \left(\mathbb{Z} / 2\mathbb{Z}\right)^2$ is generated by~$b(C_1)$ and~$b(C_2)$.
\end{proof}

The surfaces considered in cases $(1)$--$(4)$ of Proposition \ref{ClassCB} widely appear in this paper. In the following example we show how to construct a surface considered in case $(5)$. For~this aim we slightly modify the constuction of an exceptional conic bundle (see \cite[Subsection 5.2]{DI09}).

\begin{example}
Let $C$ be a non-trivial conic, $p_1$, \ldots $p_{2k}$ be a collection of $2k$ geometric points defined over $\ka$, and $H \rightarrow C$ be a double cover of $C$ branched into this collection of points. Assume that $\mathbb{F}$ is a minimal field, such that all points $p_i$ are defined over~$\mathbb{F}$. Consider two conjugate geometric points $q_1$ and $q_2$ on $\Pro^1_{\ka}$ defined over a quadratic extension~$\LL / \ka$ and not defined over $\mathbb{F}$.

Let $\iota_1$ be the involution of the double cover $H \rightarrow C$, and $\iota_2$ be an involution acting on $\Pro^1_{\ka}$ such that $\iota_2 \left( q_i \right) = q_i$. Consider an involution $\iota$ on $H \times \Pro^1_{\ka}$ that acts on $H$ and $\Pro^1_{\ka}$ as $\iota_1$ and~$\iota_2$ respectively. The quotient $Y = \left( H \times \Pro^1_{\ka} \right) / \langle \iota \rangle$ admits a structure \mbox{of a bundle $Y \rightarrow C \cong H / \langle \iota_1 \rangle$} such that its general fibre is a smooth conic. The images of~fixed points of $\iota$ are $A_1$-singularities. Therefore there are $4k$ singular points on $Y$ lying in~$2k$ fibres of $Y \rightarrow C$ over the~points~$p_i$.

Let $\widetilde{Y} \rightarrow Y$ be the minimal resolution of singularities, and let $\widetilde{Y} \rightarrow S$ be the contraction of~the~proper transforms of the $2k$ fibres of $Y \rightarrow C$ over the points $p_i$. We obtain a conic bundle $S \rightarrow C$ with $2k$ degenerate fibres over the points $p_i$. The components of these fibres are permuted by the group $\Gal\left(\LL / \ka\right)$, therefore $S$ is minimal. The group $\Am(S)$ is~isomorphic to $\mathbb{Z} / 2\mathbb{Z}$ since it is generated by $b(C)$.
\end{example}

The group $\Am(S)$ is a birational invariant, therefore it is obvious that any surface described in case $(1)$ of Proposition \ref{ClassCB} cannot be birationally equivalent to any of~surfaces listed in cases $(2)$--$(5)$. In Section $3$ for the surfaces listed in cases $(2)$--$(4)$ we~show that two of them $S_1$ and $S_2$ are birationally equivalent to each other if and only \mbox{if $\Am(S_1) = \Am(S_2)$.} Surfaces described in case $(5)$ are not birationally equivalent to~surfaces from the other cases.

Note that for a given smooth conic $C$ and even positive integer $2k$ the surface $S$ from case $(4)$ is unique, since it is isomorphic to $\Pro_{C} \left( \mathcal{O}_C \oplus \mathcal{O}_C (-2k) \right)$.

Now we describe minimal del Pezzo surfaces $S$ with $\rho(S) = 1$ and non-trivial $\Am(S)$.

\begin{proposition}
\label{ClassDP}

Let $S$ be a del Pezzo surface over a perfect field $\ka$, such that $\rho(S) = 1$ and $\Am(S)$ is non-trivial. Then there are two possibilities.

\begin{enumerate}

\item The surface $S$ is a non-trivial Severi--Brauer surface, ${K_S^2 = 9}$, \mbox{and $\Am(S) \cong \mathbb{Z} / 3\mathbb{Z}$.}

\item The surface $S$ is a pointless del Pezzo surface of degree $8$ isomorphic to $R_{\LL/\ka} C$, where $\LL$ is a quadratic extension of $\ka$ and $C$ is a conic over $\LL$, such that for any conic $N$ over $\ka$ the conics $C$ and $N_{\LL}$ are not isomorphic. One has~${\Am(S) \cong \mathbb{Z} / 2\mathbb{Z}}$.

\end{enumerate}

\end{proposition}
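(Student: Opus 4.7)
The plan is a case analysis on $d = K_S^2 \in \{1, \ldots, 9\}$. The basic setup I would record first is that $\Pic(\overline{S})^{G_{\ka}}$ is a saturated sublattice of $\Pic(\overline{S})$ having the same rank as $\Pic(S)$ (by Proposition~\ref{Brauer} the quotient $\Am(S)$ is torsion), namely rank one; both lattices contain the ample class $-K_S$, so whenever $-K_S$ happens to be primitive in $\Pic(\overline{S})$ one immediately gets $\Pic(\overline{S})^{G_{\ka}} = \mathbb{Z}\cdot(-K_S) \subset \Pic(S)$ and hence $\Am(S) = 0$.

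For $d \in \{1, \ldots, 6\}$, realizing $\overline{S}$ as the blowup of $\Pro^2_{\kka}$ at $9-d$ points shows at once that $-K_{\overline{S}} = 3L - E_1 - \ldots - E_{9-d}$ is primitive, so the observation above gives $\Am(S) = 0$, contradicting the hypothesis and ruling out these degrees. Degree $d = 7$ is excluded because the unique $(-1)$-curve $L - E_1 - E_2$ of the geometric blowup, meeting both exceptional divisors, is always Galois-invariant and linearly independent of $-K_S$, forcing $\rho(S) \geqslant 2$; the same argument, using the unique geometric $(-1)$-curve of $\mathbb{F}_1$, rules out $d = 8$ with $\overline{S} \cong \mathbb{F}_1$.

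The remaining cases are $d = 9$ and $d = 8$ with $\overline{S} \cong \Pro^1_{\kka} \times \Pro^1_{\kka}$. For $d = 9$ one has $\Pic(\overline{S}) = \mathbb{Z} H$ with $H$ tautologically Galois-invariant and $-K_S = 3H$, so non-triviality of $\Am(S)$ forces $\Pic(S) = \mathbb{Z}\cdot 3H$, giving $\Am(S) \cong \mathbb{Z}/3\mathbb{Z}$ and identifying $S$ as a non-trivial Severi--Brauer surface. For $d = 8$ the condition $\rho(S) = 1$ forces the Galois group to swap the two rulings $A, B$, so $\Pic(\overline{S})^{G_{\ka}} = \mathbb{Z}(A+B)$ while $-K_S = 2(A+B)$; non-triviality of $\Am(S)$ is then equivalent to $A+B \notin \Pic(S)$, and by Lemma~\ref{Class1}(ii) this is precisely the condition that $S$ is not embedded as a quadric, i.e.\ $S \cong R_{\LL/\ka} C$ with $C \not\cong N_{\LL}$ for every conic $N$ over $\ka$, in which case $\Am(S) \cong \mathbb{Z}/2\mathbb{Z}$.

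Pointlessness in case (2) should then follow by noting that a $\ka$-point of $S$ would base-change to an $\LL$-point of $S_{\LL} \cong C \times C'$ via Corollary~\ref{Class2}, forcing $C \cong \Pro^1_{\LL} = (\Pro^1_{\ka})_{\LL}$ and contradicting $C \not\cong N_{\LL}$. The main technical step is the degree~$8$ analysis, where translating the abstract Picard-lattice criterion ``$A+B \notin \Pic(S)$'' into the concrete statement that $C$ is not the base change of a $\ka$-conic relies essentially on Lemma~\ref{Class1}(ii); the low-degree exclusions reduce to the combinatorial fact that $-K_S$ is primitive in the geometric Picard lattice.
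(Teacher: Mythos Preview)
Your proof is correct and follows the same overall plan as the paper's: rule out the cases where $\overline{S}$ has $(-1)$-curves, then handle degrees $8$ and $9$ separately. The analysis of degrees $8$ and $9$ matches the paper essentially verbatim, including the appeal to Lemma~\ref{Class1}(ii) for the quadric criterion, and your explicit verification of pointlessness in case~(2) via Corollary~\ref{Class2} fills in a detail the paper leaves implicit. The one notable difference is in the exclusion of the low-degree cases. You split this into two pieces---primitivity of $-K_S$ in $\Pic(\overline{S})$ for $d \leqslant 6$, and a separate rank argument for $d = 7$ and $\overline{S} \cong \mathbb{F}_1$---whereas the paper handles all of these uniformly with a single intersection-theoretic observation: if $\overline{S}$ carries any $(-1)$-curve $E$, then writing $-K_S = kD$ for a generator $D$ of $\Pic(\overline{S})^{G_{\ka}}$ gives $1 = -K_{\overline{S}} \cdot E = k(D \cdot E)$, forcing $k = \pm 1$ and hence $\Am(S) = 0$. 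This buys a cleaner argument that does not need to treat degree~$7$ and $\mathbb{F}_1$ separately, but your version is equally valid and perhaps more concrete.
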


\begin{proof}

Note that $\rho\left(\overline{S}\right)^{G_{\ka}} = \rho\left(S\right) = 1$. Therefore $\Pic\left(\overline{S}\right)^{G_{\ka}}$ is generated by~a~class~$D$ such that for a certain number $k$ one has $kD = -K_S$, since $-K_S$ obviously lies \mbox{in $\Pic\left(S\right) \subset \Pic\left(\overline{S}\right)^{G_{\ka}}$.}

Assume that there is a $(-1)$-curve $E$ on $\overline{S}$, then one has
$$
1 = -K_{\overline{S}} \cdot E = kD \cdot E = k (D \cdot E).
$$
Therefore $k = 1$, since $D \cdot E$ is integer. Thus in this case $\Pic\left(S\right) = \Pic\left(\overline{S}\right)^{G_{\ka}}$, and $\Am(S)$ is trivial.

If there are no $(-1)$-curves on $\overline{S}$ then $\overline{S}$ is either $\Pro^2_{\kka}$, or $\Pro^1_{\kka} \times \Pro^1_{\kka}$. In the former case~$S$ is~a~del~Pezzo surface of degree $9$, i.e. Severi--Brauer surface. A trivial Severi--Brauer surface is just $\Pro^2_{\ka}$, and $\Am\left(\Pro^2_{\ka}\right)$ is trivial. For a non-trivial Severi--Brauer surface $S$ it~is well-known that $\Pic(S)$ is generated by $-K_S$, and $\Pic\left(\overline{S}\right)^{G_{\ka}} = \Pic\left(\Pro^2_{\kka}\right)^{G_{\ka}} = \Pic\left(\Pro^2_{\kka}\right)$ is~generated by the class of a line on $\Pro^2_{\kka}$. Therefore $\Am(S) \cong \mathbb{Z} / 3\mathbb{Z}$.

If $\overline{S} \cong \Pro^1_{\kka} \times \Pro^1_{\kka}$ then $K_S^2 = 8$. As in the case $\rho(S) = 2$, the group $\Pic\left( \Pro^1_{\kka} \times \Pro^1_{\kka} \right)$ is~generated by the classes $A$ and $B$ of fibres of the projections on the first and the second factors of $\Pro^1_{\kka} \times \Pro^1_{\kka}$, and $\Pic(S)$ contains $-K_S \sim 2A + 2B$. If $\rho(S) = 1$, then the Galois group $G_{\ka}$ permutes $A$ and $B$, and $\Pic\left(\overline{S}\right)^{G_{\ka}}$ is generated by the class $A + B$. Therefore if $\Am(S)$ is not trivial, then $\Am(S) \cong \mathbb{Z} / 2\mathbb{Z}$ and $\Pic(S)$ does not contain $A + B$.

As in the proof of Proposition \ref{ClassCB} one can easily show that $\Pic(S)$ contains $A + B$ if and only if $S$ is isomorphic to a quadric in $\Pro^3_{\ka}$. Therefore for this case we can apply Lemma~\ref{Class1} and get that $S$ is isomorphic to $R_{\LL/\ka} N_{\LL}$, where $N$ is a conic over $\ka$. Thus for \mbox{the case $\Am(S) \cong \mathbb{Z} / 2\mathbb{Z}$} the surface $S$ is not isomorphic to a quadric in $\Pro^3_{\ka}$. By Lemma~\ref{Class1} the surface $S$ is isomorphic to $R_{\LL/\ka} C$, where $\LL$ is a quadratic extension of $\ka$ and $C$ is~a~conic over $\LL$, such that for any conic $N$ over $\ka$ the conics $C$ and $N_{\LL}$ are not isomorphic. 
\end{proof}

\begin{remark}
It is well known, that two non-trivial Severi--Brauer surfaces $S$ and $S'$ are birationally equivalent if and only if $S \cong S'$ or $S' \cong S^{\mathrm{op}}$, where $S^{\mathrm{op}}$ is the Severi--Brauer surface such that the central simple algebras corresponding to $S$ and $S^{\mathrm{op}}$ are opposite (see \cite[Corollary 9.5]{Ami55}). Any other minimal surface is not birationally equivalent to $S$ and~$S'$ by~\cite[Corollary 2.4 and Theorem 2.10]{Sh20} (see also \cite{Wei22}). Applying Propositions \ref{ClassCB} and \ref{ClassDP} we can obtain an~alternative proof of this fact.

By Proposition \ref{ClassDP} a non-trivial Severi--Brauer surface $S$ has $\Am(S) \cong \mathbb{Z} / 3\mathbb{Z}$ generated by the class $b(S)$ of $S$. If a surface $S'$ is birationally equivalent to $S$ \mbox{then $\Am(S') = \Am(S) \cong \mathbb{Z} / 3\mathbb{Z}$.} Therefore $S$ is not birationally equivalent to~any~relatively minimal conic bundle by Proposition \ref{ClassCB}. If $S'$ is a minimal del Pezzo surface then $S'$ is~a~Severi--Brauer surface by Proposition \ref{ClassDP}, and $b(S')$ \mbox{generates $\Am(S') = \Am(S) \cong \mathbb{Z} / 3\mathbb{Z}$.} There are two possibilities: \mbox{either $b(S') = b(S)$} and~$S' \cong S$, or $b(S') = -b(S)$ and $S' \cong S^{\mathrm{op}}$.

\end{remark}

Note that non-isomorphic del Pezzo surfaces of degree $8$ with $\rho(X) = 1$ can have the~same group $\Am(X)$. For example, if $X$ is a pointless quadric in $\Pro^3_{\ka}$ then $\Am(X)$ is~trivial. Therefore we want to consider more invariants for such surfaces.

For a given del Pezzo surface $X$ of degree $8$ with $\rho(X) = 1$ there exists a quadratic extension $\LL / \ka$, such that $X_{\LL} \cong C_1 \times C_2$, where $C_1$ and $C_2$ are smooth conics over $\LL$. The~field~$\LL$ is uniquely determinated by $X$, since $\LL = \kka^{K}$, where~$K$ is~the~kernel of~the~action of $\Gal\left(\kka / \ka\right)$ on $\Pic(\overline{X})$. For simplicity of notation we will say that~$\LL$ is~the~\textit{splitting field} of $X$.

We can consider the group $\Am\left( X_{\LL} \right) \subset \operatorname{Br}\left(\LL\right)$. This group is obviously a birational invariant. By Lemma \ref{Class1} one has $X \cong R_{\LL/\ka} C$, where $C$ is a smooth conic over $\LL$. Let $C'$ be the conjugate conic. Then by Corollary \ref{Class2} one has $X_{\LL} \cong C \times C'$.

Applying Proposition \ref{ClassCB} we can see that there are three possibilities for $\Am(X_{\LL})$:

\begin{itemize}

\item $\Am(X_{\LL})$ is trivial, if and only if $C \cong \Pro^1_{\LL}$;  

\item $\Am(X_{\LL}) \cong \mathbb{Z} / 2\mathbb{Z}$, if and only if $C \cong C'$ and $C$ is non-trivial;  

\item $\Am(X_{\LL}) \cong \left(\mathbb{Z} / 2\mathbb{Z}\right)^2$, if and only if $C$ and $C'$ are not isomorphic.

\end{itemize}

Therefore for a given del Pezzo surface $X$ of degree $8$ with $\rho(X) = 1$ we can consider a~pair of invariants $\left( \LL, \Am(X_{\LL}) \right)$. We show that this pair is a biregular invariant.

\begin{theorem}
\label{BBreg}
Let $X$ and $X'$ be two del Pezzo surfaces of degree $8$ with $\rho(X) = \rho(X') = 1$. Then $X$ and $X'$ have the same splitting field $\LL$ and $\Am(X_{\LL}) = \Am(X'_{\LL})$ if~and only \mbox{if $X \cong X'$}. 

\end{theorem}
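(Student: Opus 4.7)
The ``only if'' direction is immediate, since both the splitting field and the Amitsur subgroup are biregular invariants. For the converse, the strategy is to recover the unordered pair $\{C, C^{\sigma}\}$ of conics over $\LL$ (where $\sigma$ denotes the non-trivial element of $\Gal(\LL/\ka)$) from the invariant $\Am(X_{\LL})$, after which Lemma~\ref{Class1}(i) gives $X \cong X'$. By Lemma~\ref{Class1}(i), write $X \cong R_{\LL/\ka} C$ and $X' \cong R_{\LL/\ka} C'$ for conics $C$, $C'$ over $\LL$, each determined up to $\sigma$-conjugation; by Corollary~\ref{Class2} one then has $X_{\LL} \cong C \times C^{\sigma}$ and $X'_{\LL} \cong C' \times (C')^{\sigma}$.

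I would then split along the three possibilities for $\Am(X_{\LL}) = \Am(X'_{\LL})$ listed in the bulleted list preceding the theorem. If $\Am(X_{\LL})$ is trivial, then $C \cong \Pro^1_{\LL} \cong C'$. If $\Am(X_{\LL}) \cong \mathbb{Z}/2\mathbb{Z}$, then $C \cong C^{\sigma}$ and $C' \cong (C')^{\sigma}$ are both non-trivial, with $b(C)$ and $b(C')$ each equal to the unique nonzero element of $\Am(X_{\LL})$; since a conic over $\LL$ is determined by its Brauer class, this forces $C \cong C'$.

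The substantive case is $\Am(X_{\LL}) \cong (\mathbb{Z}/2\mathbb{Z})^2$. Here $C \not\cong C^{\sigma}$, and the three nonzero elements of $\Am(X_{\LL})$ are $b(C)$, $b(C^{\sigma})$ and $b(C) + b(C^{\sigma})$. The natural action of $\Gal(\LL/\ka)$ on $\operatorname{Br}(\LL)$ restricts to $\Am(X_{\LL})$, and satisfies $\sigma(b(C)) = b(C^{\sigma})$. Consequently $b(C) + b(C^{\sigma})$ is the unique $\sigma$-fixed nonzero element of $\Am(X_{\LL})$, while $\{b(C), b(C^{\sigma})\}$ is the unique two-element $\sigma$-orbit. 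The same description applies to $X'_{\LL}$. Since $\Am(X_{\LL})$ and $\Am(X'_{\LL})$ coincide as subgroups of $\operatorname{Br}(\LL)$ (and the Galois action is intrinsic to $\operatorname{Br}(\LL)$), their two-element Galois orbits coincide, yielding $\{b(C), b(C^{\sigma})\} = \{b(C'), b((C')^{\sigma})\}$. Again using that conics over $\LL$ are determined by their Brauer classes, this is equivalent to $\{C, C^{\sigma}\} = \{C', (C')^{\sigma}\}$, as required.

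The main technical point is the treatment of $\Am(X_{\LL})$ in the last case as a $\Gal(\LL/\ka)$-module rather than merely an abstract abelian subgroup of $\operatorname{Br}(\LL)$: the Galois action is precisely what singles out the pair $\{b(C), b(C^{\sigma})\}$ among the three nonzero classes, whereas the underlying abelian subgroup by itself would leave three a priori possible pairings. Once this observation is in place, the three cases fit into a uniform scheme reducing the biregular classification to the description of the Brauer classes of the factors.
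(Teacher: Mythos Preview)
Your proof is correct and follows essentially the same approach as the paper: both reduce to recovering the unordered pair $\{C, C^{\sigma}\}$ from $\Am(X_{\LL})$ via the same three-way case split, using the $\Gal(\LL/\ka)$-action on $\operatorname{Br}(\LL)$ in the $(\mathbb{Z}/2\mathbb{Z})^2$ case to single out the genuine conjugate pair. The only cosmetic difference is that in that last case the paper phrases the argument as ``the third class, being Galois-fixed, would yield a Weil restriction with $\Am \cong \mathbb{Z}/2\mathbb{Z}$, hence cannot arise,'' whereas you read off the two-element Galois orbit directly; these are equivalent formulations of the same observation.
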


\begin{proof}
If $X \cong X'$ then they obviously have the same splitting field $\LL$ \mbox{and $\Am(X_{\LL}) = \Am(X'_{\LL})$}.

Now assume that $X$ and $X'$ have the same splitting field $\LL$ and~$\Am(X_{\LL}) = \Am(X'_{\LL})$ and~show that $X \cong X'$.

Note that $X \cong R_{\LL/\ka} C$, where $b(C) \in \Am(X_{\LL})$. For \mbox{a $\Gal\left(\LL / \ka \right)$-conjugate} conic $C'$ one has $R_{\LL/\ka} C \cong R_{\LL/\ka} C'$ by Lemma \ref{Class1}. Moreover, $X_{\LL} \cong C \times C'$ by Corollary \ref{Class2}.

If $\Am(X_{\LL})$ is trivial then $C \cong C' \cong \Pro^1_{\ka}$. If $\Am(X_{\LL}) \cong \mathbb{Z} / 2\mathbb{Z}$ then there is a unique non-trivial element in $\Am(X_{\LL})$ corresponding to $C \cong C'$.

If $\Am(X_{\LL}) \cong \left(\mathbb{Z} / 2\mathbb{Z}\right)^2$ then there are three non-trivial elements in $\Am(X_{\LL})$. Two of~these elements are $\Gal\left(\LL / \ka \right)$-conjugate since in this case $X_{\LL} \cong C \times C'$, where $C$ and~$C'$ are not isomorphic. Therefore even if the third non-trivial element in $\Am(X_{\LL})$ corresponds to~a~conic~$\widetilde{C}$ then the $\Gal\left(\LL / \ka \right)$-conjugate conic is isomorphic to $\widetilde{C}$. Then \mbox{for $\widetilde{X} = R_{\LL/\ka} \widetilde{C}$} one has $\widetilde{X}_{\LL} \cong \widetilde{C} \times \widetilde{C}$, and $\Am(\widetilde{X}_{\LL}) \cong \mathbb{Z} / 2\mathbb{Z}$. Thus if $\Am(X_{\LL}) \cong \left(\mathbb{Z} / 2\mathbb{Z}\right)^2$ then we can definitely restore the pair of conjugate conics $C$ and $C'$ such that $X_{\LL} \cong C \times C'$ \mbox{and $X \cong R_{\LL/\ka} C \cong R_{\LL/\ka} C'$.}

Therefore in any case $X' \cong R_{\LL/\ka} C \cong X$.
\end{proof}

\begin{question}
Let $C$ be a conic over $\LL$, the conic $C'$ be its $\Gal\left(\LL / \ka \right)$-conjugate \mbox{and $X \cong R_{\LL/\ka} C$.} Assume that $C$ and $C'$ are not isomorphic. Then $\Am(X_{\LL}) \cong \left(\mathbb{Z} / 2\mathbb{Z}\right)^2$. If~the~third non-trivial element in $\Am(X_{\LL})$ corresponds to~a~conic~$\widetilde{C}$, how can one describe the surface $R_{\LL/\ka} \widetilde{C}$?
\end{question}

In Section $4$ we show that the pair $\left( \LL, \Am(X_{\LL}) \right)$ is also a birational invariant.

\begin{remark}
\label{Quadricaction}
Actually, the consideration of such pairs was inspired by \cite[Lemma~3.4]{ShV18}, that originally was obtained to describe the automorphism groups of pointless del~Pezzo surfaces of~degree~$8$. For the case $\rho(X) = 1$ we show connection between the~groups~$\Am(X)$ and~$\Am\left(X_{\LL}\right)$ and the structure of the group $\Aut(X)$ in the following table.


\begin{center}
\begin{tabular}{|c|c|c|c|c|}
\hline
$\Am(X)$ & $\Am(X_{\LL})$ & Quadric & $C$ & $\Aut(X)$ \\
\hline
$\langle \mathrm{id} \rangle \rule[-5pt]{0pt}{18pt}$ & $\langle \mathrm{id} \rangle$ & yes & $C \cong \Pro^1_{\LL}$ & $\operatorname{PGL}_2(\LL) \rtimes \left( \mathbb{Z} / 2\mathbb{Z} \right)$ \\
\hline
$\langle \mathrm{id} \rangle \rule[-5pt]{0pt}{18pt}$ & $\mathbb{Z} / 2\mathbb{Z}$ & yes & $C \cong N_{\LL}$, where $N$ is a conic over $\ka$ & $\Aut\left(C\right) \rtimes \left( \mathbb{Z} / 2\mathbb{Z} \right)$ \\
\hline
$\mathbb{Z} / 2\mathbb{Z} \rule[-5pt]{0pt}{18pt}$ & $\mathbb{Z} / 2\mathbb{Z}$ & no & $C \cong C'$ is not isomorphic to $N_{\LL}$ & $\Aut\left(X\right) / \Aut\left(C\right) \cong \mathbb{Z} / 2\mathbb{Z}$ \\
\hline
$\mathbb{Z} / 2\mathbb{Z} \rule[-5pt]{0pt}{18pt}$ & $\left(\mathbb{Z} / 2\mathbb{Z}\right)^2$ & no & $C$ is not isomorphic to $C'$ & $\Aut\left(X\right) \cong \Aut\left(C\right)$ \\
\hline
\end{tabular}
\end{center}


In the third column we give an answer to the question whether the surface $X$ is isomorphic to a quadric in $\Pro^3_{\ka}$. In the fourth row the group $\Aut\left(X\right)$ is a non-split extension of~$\Aut\left(C\right)$ by $\mathbb{Z} / 2\mathbb{Z}$, that means that $\Aut\left(X\right)$ contains a normal subgroup $\Aut\left(C\right)$ of~index $2$ but is not isomorphic to any semi-direct product $\Aut\left(C\right) \rtimes \left( \mathbb{Z} / 2\mathbb{Z} \right)$. 
\end{remark}

\section{Sarkisov links for the case $\rho(X) = 2$}

In this section we consider Sarkisov links for pointless del Pezzo surfaces of degree $8$ with the Picard number $2$, and prove Theorem \ref{mainrho2}.

Let $X$ be a pointless del Pezzo surface of degree $8$ with the Picard number $2$. By~Lemma~\ref{Class1} in this case $X$ is isomorphic to a product of two smooth conics, such that at least one of these conics is non-trivial. Therefore by Proposition \ref{ClassCB} \mbox{either $\Am(X) \cong \left(\mathbb{Z} / 2\mathbb{Z}\right)^2$,} or $\Am(X) \cong \mathbb{Z} / 2\mathbb{Z}$.

We start from the case $\Am(X) \cong \left(\mathbb{Z} / 2\mathbb{Z}\right)^2$. Assume that there exists a minimal surface~$X'$, and a birational map $X \dashrightarrow X'$. Then $\Am(X') = \Am(X) \cong \left(\mathbb{Z} / 2\mathbb{Z}\right)^2$. By~Propositions~\ref{ClassCB} and \ref{ClassDP} it is possible only if $X' \cong C'_1 \times C'_2$, where $C'_1$ and $C'_2$ are two smooth non-trivial non-isomorphic conics. Moreover, \cite[Theorem 2]{Kol05} implies that if for a~minimal surface $X' \cong C'_1 \times C'_2$ one has $\Am(X') = \Am(X) \cong \left(\mathbb{Z} / 2\mathbb{Z}\right)^2$, then $X'$ is birationally equivalent to $X$.

Actually we want to remind some results and constructions from \cite{Kol05} to describe birational maps from $X$ more explicitly.

\begin{definition}
\label{BPdef}
Let $C_1$ and $C_2$ be two smooth conics, and $b(C_1)$ and $b(C_2)$ be the classes of these conics in $\operatorname{Br}\left(\ka\right)$. If there exists a conic $C_3$, such that $b(C_3) = b(C_1) + b(C_2)$, then~$C_3$ is called the \textit{Brauer product} of $C_1$ and $C_2$. We denote the Brauer product of $C_1$ and $C_2$ by $C_1 * C_2$.
\end{definition}

We want to show that $C_1 * C_2$ is defined if and only if there exists a quadratic extension~$F / \ka$, such that $C_1 \otimes F \cong C_2 \otimes F \cong \Pro^1_{F}$. In the other words there are points of~degree~$2$ on $C_1$ and $C_2$, such that the corresponding geometric points are defined over~$F$. Note that this condition holds if $C_1 \cong C_2$, or if $C_1$ or $C_2$ is trivial.

\begin{lemma}
\label{CBlink}
Let $C_1$ and $C_2$ be two smooth conics. Suppose that there exist points of~degree~$2$ on $C_1$ and $C_2$, such that the corresponding geometric points are defined over a~quadratic extension $F / \ka$. Then there exists a conic $C_3 = C_1 * C_2$, and $X \cong C_1 \times C_2$ is~birationally equivalent to $C_1 \times C_3$ and $C_2 \times C_3$.
\end{lemma}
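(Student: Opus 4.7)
The plan is a direct computation over the splitting field $F$, first producing $C_3$ in a normal form and then writing down an explicit Galois-equivariant birational map.

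First I would settle the existence of $C_3$. Since both $C_1$ and $C_2$ acquire $F$-points, their Brauer classes lie in the relative Brauer group $\operatorname{Br}(F/\ka)$, and writing $F = \ka(\sqrt{d})$ each $C_i$ can be put in the normal form $C_i : x^2 - d y^2 = a_i z^2$ with $a_i \in \ka^*$, its Brauer class being the cyclic algebra class $(a_i, d)$. Then
\[
C_3 : x^2 - d y^2 = a_1 a_2 z^2
\]
satisfies $b(C_3) = (a_1 a_2, d) = (a_1, d) + (a_2, d) = b(C_1) + b(C_2)$, so $C_3 = C_1 * C_2$ exists.

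Next I would exhibit the birational map $C_1 \times C_2 \dashrightarrow C_1 \times C_3$. Parameterize $C_i \otimes F$ by $t_i = (x - \sqrt{d}\,y)/z$, so that $(x + \sqrt{d}\,y)/z = a_i/t_i$; this identifies $X_F$ with $\Pro^1_F \times \Pro^1_F$, and a short calculation on coordinates shows the nontrivial Galois element $\sigma$ of $\operatorname{Gal}(F/\ka)$ acts by the twisted rule
\[
\sigma \cdot (\alpha, \beta) = \bigl(a_1/\sigma(\alpha),\; a_2/\sigma(\beta)\bigr).
\]
Consider the birational self-map
\[
\varphi : \Pro^1_F \times \Pro^1_F \dashrightarrow \Pro^1_F \times \Pro^1_F, \qquad (\alpha, \beta) \longmapsto (\alpha, \alpha\beta),
\]
with inverse $(u,v) \mapsto (u, v/u)$. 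Then
\[
\varphi\bigl(a_1/\sigma(\alpha),\; a_2/\sigma(\beta)\bigr) = \bigl(a_1/\sigma(\alpha),\; a_1 a_2/\sigma(\alpha\beta)\bigr),
\]
which is precisely the twisted Galois action corresponding to $C_1 \times C_3$ on the target. Hence $\varphi$ descends to a birational map $C_1 \times C_2 \dashrightarrow C_1 \times C_3$ over $\ka$, and the symmetric map $(\alpha, \beta) \mapsto (\alpha\beta, \beta)$ gives a birational map $C_1 \times C_2 \dashrightarrow C_3 \times C_2$.

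The only real obstacle is the Galois-descent bookkeeping: confirming that the parameters $t_i$ transform according to $\alpha \mapsto a_i/\sigma(\alpha)$ and that the elementary-transformation-type map $\varphi$ intertwines the source action for $C_1 \times C_2$ with the target action for $C_1 \times C_3$. Once the normal forms $C_i : x^2 - d y^2 = a_i z^2$ are fixed, the additivity of $(\,\cdot\,, d)$ in its first argument makes the match automatic.
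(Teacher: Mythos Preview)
Your proof is correct, but it takes a different route from the paper's. The paper argues geometrically: it blows up the conjugate pair of points $A_1\cap B_1$, $A_2\cap B_2$ on $X$ (where $A_i$, $B_i$ are the $F$-fibres of the two projections), contracts the proper transforms of $A_1$, $A_2$, and observes that the resulting surface $X'$ is again a $\ka$-form of $\Pro^1\times\Pro^1$ with one projection still going to $C_1$; the other projection has geometric fibre class $A+B$ in $\Pic(\overline{X})$, so its base conic has Brauer class $b(C_1)+b(C_2)$. You instead put everything in normal form over $F=\ka(\sqrt{d})$ and write down the explicit Galois-equivariant map $(\alpha,\beta)\mapsto(\alpha,\alpha\beta)$. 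In fact your map \emph{is} the same elementary transformation---its indeterminacy locus is the conjugate pair $\{(0,\infty),(\infty,0)\}$, exactly the points the paper blows up---but your verification is purely algebraic (checking the twisted $\sigma$-action matches on source and target) rather than via intersection theory on $\Pic(\overline X)$. Your approach is more elementary and gives an explicit formula; the paper's approach avoids choosing normal forms and integrates more naturally with the Sarkisov-link language used throughout Sections~3--4.
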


\begin{proof}
Let $\pi_1: X \rightarrow C_1$ and $\pi_2: X \rightarrow C_2$ be the projections. Let $A$ and $B$ be the classes in $\Pic(\overline{X})$ of geometric fibres of $\pi_1$ and $\pi_2$ respectively. Let $A_1$ and $A_2$ be conjugate fibres of $\pi_1$ defined over $F$, and $B_1$ and $B_2$ be conjugate fibres of $\pi_2$ defined over $F$. Then the~pair of points $A_1 \cap B_1$ and $A_2 \cap B_2$ are defined over $F$. We can equivariantly blow up this pair of points, contract the proper transforms of $A_1$ and $A_2$, and get a surface $X'$.

Note that the birational map $X \dashrightarrow X'$ respects the projection $\pi_1: X \rightarrow C_1$. Therefore~$X'$ admits a structure of a conic bundle over $C_1$. Moreover, $K_{X'}^2 = K_X^2 = 8$, therefore~$X'$ is a $\ka$-form of a Hirzebruch surface $\mathbb{F}_n$. Also note that the proper transforms of~$B_1$ and $B_2$ are sections of the conic bundle $X' \rightarrow C_1$ with selfintersection number $0$. For $n > 0$ there are no sections of $\mathbb{F}_n \rightarrow C_1$ with selfintersection number $0$. Thus $X' \cong \mathbb{F}_0$ that is a product of two conics.

One of these conics is $C_1$, since $X'$ admits a structure of a conic bundle over $C_1$. Denote the other conic by $C_3$. Note that preimage of a geometric point on $C_3$ under the~sequence of~maps~$X \dashrightarrow X' \rightarrow C_3$ has class $A + B$ in $\Pic\left(\overline{X}\right)$. \mbox{Therefore $b(C_3) \in \operatorname{Br}\left(\ka\right)$} \mbox{is $b(C_1) + b(C_2)$,} since the preimages of geometric points on~$C_1$ and~$C_2$ under the projections~$\pi_1$ and~$\pi_2$ have classes $A$ and $B$ in $\Pic\left(\overline{X}\right)$ respectively. Thus $C_3 = C_1 * C_2$, \mbox{and $X \cong C_1 \times C_2$} is birationally equivalent to $X' \cong C_1 \times C_3$. In the same way we can show that $X$ is birationally equivalent to $C_2 \times C_3$.
\end{proof}

The following lemma directly follows from the proof of \cite[Lemma 8]{Kol05}.

\begin{lemma}
\label{CBnonlink}
Let $C_1$ and $C_2$ be two smooth conics, such that for any pair of points of degree $2$ on $C_1$ and $C_2$ the corresponding geometric points are not defined over any quadratic extension $\mathbb{F}/\ka$. Then a conic with class $b(C_1) + b(C_2)$ does not exist.
\end{lemma}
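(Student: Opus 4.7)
The plan is to argue by contradiction using the standard correspondence between smooth conics over $\ka$ and quaternion $\ka$-algebras. Suppose that a conic $C_3$ with $b(C_3) = b(C_1) + b(C_2)$ exists. Write $A_1, A_2, A_3$ for the quaternion algebras whose Brauer classes in $\operatorname{Br}(\ka)$ are $b(C_1), b(C_2), b(C_3)$. Then $[A_1 \otimes_\ka A_2] = [A_3]$ in $\operatorname{Br}(\ka)$, so the biquaternion algebra $A_1 \otimes_\ka A_2$ has Schur index at most $2$.

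The key tool I would invoke is Albert's theorem on biquaternion algebras: for two quaternion $\ka$-algebras $A_1$ and $A_2$, the algebra $A_1 \otimes_\ka A_2$ has index at most $2$ if and only if $A_1$ and $A_2$ admit a common quadratic subfield. Equivalently, there exists a single quadratic extension $F/\ka$ that simultaneously splits both $A_1$ and $A_2$, i.e.\ $C_1 \otimes_\ka F \cong C_2 \otimes_\ka F \cong \Pro^1_F$. This is the substantive input — and is precisely the content underlying the proof of \cite[Lemma 8]{Kol05} that is being cited.

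It then remains to translate the splitting of $C_1$ and $C_2$ by $F$ back into the geometric language of the lemma. First, observe that neither $C_1$ nor $C_2$ can be trivial: the trivial conic $\Pro^1_\ka$ admits degree-$2$ closed points whose geometric fibres are defined over every quadratic extension of $\ka$, contradicting the hypothesis. For a non-trivial conic $C_i$, the condition $C_i \otimes_\ka F \cong \Pro^1_F$ is equivalent to the existence of an $F$-point on $C_i$, and such an $F$-point is precisely a closed point of degree $2$ on $C_i$ whose two geometric points lie in $F$. Applying this for $i = 1$ and $i = 2$ produces the pair of degree-$2$ points on $C_1$ and $C_2$ with geometric points in a common quadratic extension, contradicting the hypothesis and completing the proof.

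The main obstacle — and indeed the only non-routine step — is the appeal to Albert's theorem. Granting that result, the argument reduces to the elementary dictionary between $F$-splittings of a non-trivial conic and degree-$2$ points with geometric fibres in $F$, together with the observation ruling out the trivial-conic case.
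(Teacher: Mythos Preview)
Your proof is correct and is essentially the same approach as the paper's: the paper does not give an independent argument but simply invokes the proof of \cite[Lemma~8]{Kol05}, whose content is exactly the Albert criterion you use (index of $A_1 \otimes_{\ka} A_2$ at most $2$ $\Longleftrightarrow$ $A_1$ and $A_2$ share a quadratic splitting field). You have spelled out that argument and added the translation into the language of degree-$2$ points, which is routine.
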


One can find an example of such pair of conics in \cite[Example 5.4]{Cao18}.

Now consider the case $\Am(X) \cong \mathbb{Z} / 2\mathbb{Z}$. By Proposition \ref{ClassCB} the surface $X$ is isomorphic either to $C \times C$, or to $C \times \Pro^1_{\ka}$ where $C$ is a smooth non-trivial conic. Moreover, by~Lemma~\ref{CBlink} for a given smooth non-trivial conic $C$ the~surfaces~$C \times C$ and $C \times \Pro^1_{\ka}$ are birationally equivalent. In the following lemma we show that these surfaces are birationally equivalent to certain $\ka$-forms of Hirzebruch surfaces.

\begin{lemma}
\label{CBLinkF2k}
Let $C$ be a smooth non-trivial conic. Then the surfaces $C \times C$, $C \times \Pro^1_{\ka}$, and $\ka$-forms of Hirzebruch surfaces $\mathbb{F}_{2k}$ admitting a structure of conic bundle over $C$ are~birationally equivalent.
\end{lemma}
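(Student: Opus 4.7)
The plan is to combine Lemma \ref{CBlink} with a sequence of Galois-equivariant elementary transformations that decrease the index $2k$ by $2$ at each step. First, $C \times C \approx C \times \Pro^1_{\ka}$ follows directly from Lemma \ref{CBlink}: any quadratic extension $F/\ka$ that splits $C$ also splits $\Pro^1_{\ka}$, and the Brauer product has class $b(C) + b(\Pro^1_{\ka}) = b(C)$, so $C * \Pro^1_{\ka} \cong C$. It then remains to show, by induction on $k \geq 1$, that any $\ka$-form $S_k$ of $\mathbb{F}_{2k}$ carrying a conic bundle structure over $C$ is birationally equivalent to a $\ka$-form $S_{k-1}$ of $\mathbb{F}_{2k-2}$ with conic bundle over $C$, where $S_0$ denotes $C \times \Pro^1_{\ka}$ (equivalently $C \times C$).

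For the inductive step, let $E \subset S_k$ be the section of self-intersection $-2k$; it is geometrically unique, hence defined over $\ka$ and isomorphic to $C$ as a section of the conic bundle $\pi: S_k \to C$. Pick a closed point $P \in C$ of degree $2$ (which exists because $C$ is a non-trivial conic), let $F = \kappa(P)$, and pick an $F$-point $p$ of the fibre $\pi^{-1}(P) \cong \Pro^1_F$ not lying on $E$. Then $p$ determines a closed point of $S_k$ of degree $2$ disjoint from $E$. Let $\sigma: \tilde{S} \to S_k$ be the blowup of $S_k$ at this closed point, and let $\tau: \tilde{S} \to S'$ contract the proper transform $\tilde{F}_P$ of $\pi^{-1}(P)$, which over $\kka$ decomposes into a disjoint union of two Galois-conjugate $(-1)$-curves, hence is contractible over $\ka$. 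Then $\tau \circ \sigma^{-1}: S_k \dashrightarrow S'$ is a birational equivalence, and $S'$ is smooth projective with $K_{S'}^2 = 8$ and an induced conic bundle structure over $C$.

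Geometrically, this construction performs two simultaneous classical elementary transformations on $\mathbb{F}_{2k}$ at conjugate points $p_1, p_2$ lying off the negative section $E$ in distinct fibres $F_{P_1}, F_{P_2}$. A direct intersection calculation shows that the proper transform of $E$ meets each $\tilde{F}_{P_i}$ transversally at the point $E \cap F_{P_i}$ (which was not blown up, since $p_i \notin E$), so after the two contractions the image of $E$ in $S'$ has self-intersection $-2k + 2$; consequently $S' \otimes_\ka \kka \cong \mathbb{F}_{2k-2}$. Hence $S'$ is a $\ka$-form of $\mathbb{F}_{2k-2}$ admitting a conic bundle structure over $C$. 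For $k \geq 2$, the uniqueness noted after Proposition \ref{ClassCB} yields $S' \cong S_{k-1}$; for $k = 1$, cases (2)--(3) of Proposition \ref{ClassCB} force $S' \cong C \times \Pro^1_{\ka}$ or $S' \cong C \times C$, both of which are birationally equivalent to $C \times C$ by the first step. The main subtlety is the Galois-equivariant bookkeeping together with the self-intersection computation on the proper transform of $E$; the equivariance is automatic because $C$ has no $\ka$-point, which forces the two geometric preimages of the chosen closed point to lie in distinct conjugate fibres and off $E$.
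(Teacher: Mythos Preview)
Your proof is correct and uses essentially the same mechanism as the paper's: Lemma~\ref{CBlink} for $C\times C \approx C\times\Pro^1_{\ka}$, followed by elementary transformations along a pair of conjugate fibres to link the $\ka$-forms of $\mathbb{F}_{2k}$ over $C$. The only difference is the direction of the induction: the paper blows up the degree-$2$ point \emph{on} the distinguished section and goes up ($C\times\Pro^1_{\ka}\dashrightarrow\mathbb{F}_2$ and $\mathbb{F}_{2k}\dashrightarrow\mathbb{F}_{2k+2}$), whereas you blow up \emph{off} the section and go down ($\mathbb{F}_{2k}\dashrightarrow\mathbb{F}_{2k-2}$). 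The paper's ascending direction is marginally cleaner because it never needs to identify the resulting $\mathbb{F}_0$-form; your appeal to cases (2)--(3) of Proposition~\ref{ClassCB} at $k=1$ implicitly uses that $\Am(S')=\Am(S_1)\cong\mathbb{Z}/2\mathbb{Z}$ by birational invariance (or, equivalently, that the image of $E$ is a $(0)$-section, forcing the second factor to be $\Pro^1_{\ka}$), which you could make explicit.
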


\begin{proof}
The surfaces $C \times C$ and $C \times \Pro^1_{\ka}$ are birationally equivalent by Lemma \ref{CBlink}, \mbox{since $C * C \cong \Pro^1_{\ka}$.}

Now consider $X \cong C \times \Pro^1_{\ka}$. Let $\pi_1: X \rightarrow C$ and $\pi_2: X \rightarrow \Pro^1_{\ka}$ be the projections. Then for a $\ka$-point $p \in \Pro^1_{\ka}$, the preimage $B = \pi_2^{-1}(p)$ is a section of $\pi_2$. Let $A_1$ and $A_2$ be conjugate fibres of $\pi_1$. Then the points $A_1 \cap B$ and $A_2 \cap B$ are conjugate. We can equivariantly blow up this pair of points, contract the proper transforms of $A_1$ and $A_2$, and get a surface $X'$.

The birational map $X \dashrightarrow X'$ respects the projection $\pi_1: X \rightarrow C$. Therefore~$X'$ admits a structure of a conic bundle over $C$. Moreover, $K_{X'}^2 = K_X^2 = 8$, therefore~$X'$ is a $\ka$-form of a Hirzebruch surface $\mathbb{F}_n$. Note that the proper transform of $B$ is a section of~the~conic bundle $X' \rightarrow C$ with selfintersection number $-2$. Therefore $n = 2$. \mbox{Thus $C \times \Pro^1_{\ka}$} is~birationally equivalent to a $\ka$-form of a Hirzebruch surface $\mathbb{F}_2$ admitting a structure of a conic bundle over $C$.

Now consider a $\ka$-form $S$ of a Hirzebruch surface $\mathbb{F}_{2k}$ admitting a structure of a conic bundle $\pi:S \rightarrow C$. There is a unique section $H$ of $\pi$ such that $H^2 = -2k$. Let $A_1$ and~$A_2$ be conjugate fibres of $\pi$. Then the points $A_1 \cap H$ and $A_2 \cap H$ are conjugate. We can equivariantly blow up this pair of points, contract the proper transforms of $A_1$ and $A_2$, and get a surface $S'$. 

The birational map $S \dashrightarrow S'$ respects the projection $\pi: S \rightarrow C$. Therefore $S'$ admits a structure of a conic bundle over $C$. Moreover, $K_{S'}^2 = K_S^2 = 8$, therefore $S'$ is a $\ka$-form of a Hirzebruch surface $\mathbb{F}_n$. Note that the proper transform of $H$ is a section of the conic bundle $S' \rightarrow C$ with selfintersection number $-2k-2$. Therefore $n = 2k+2$. Thus $S$ is~birationally equivalent to a $\ka$-form of a Hirzebruch surface $\mathbb{F}_{2k+2}$ admitting a structure of conic bundle over $C$.

Thus we see that all surfaces listed in Lemma \ref{CBLinkF2k} are birationally equivalent.
\end{proof}

Now we prove Theorem \ref{mainrho2}.

\begin{proof}[Proof of Theorem \ref{mainrho2}]

Let $X$ be a pointless del Pezzo surface of degree $8$ \mbox{with $\rho(X) = 2$.} Then $\overline{X} \cong \Pro^1_{\kka} \times \Pro^1_{\kka}$, and $X$ is a product of two conics. If $\Am(X)$ is trivial \mbox{then $X \cong \Pro^1_{\ka} \times \Pro^1_{\ka}$} and $X(\ka) \neq \varnothing$. Therefore $\Am(X)$ is non-trivial, and by Proposition~\ref{ClassCB} \mbox{either $\Am(X) \cong \left( \mathbb{Z} / 2\mathbb{Z} \right)^2$,} or $\Am(X) \cong \mathbb{Z} / 2\mathbb{Z}$.

If $\Am(X) \cong \left( \mathbb{Z} / 2\mathbb{Z} \right)^2$ then $X$ is a product of two non-trivial smooth conics $C_1$ and~$C_2$, not isomorphic to each other. If the Brauer product $C_3 = C_1 * C_2$ is defined \mbox{then $X \approx C_1 \times C_3 \approx C_2 \times C_3$} by Lemma \ref{CBlink}. 

If a minimal surface $X'$ is birationally equivalent to $X$ then
$$
\Am(X') = \Am(X) \cong \left( \mathbb{Z} / 2\mathbb{Z} \right)^2.
$$
\noindent By Propositions \ref{ClassCB} and \ref{ClassDP} this is possible only if $X'$ is a product of two smooth conics $C'_1$ and $C'_2$. Moreover, $b(C'_1)$ and $b(C'_2)$ are distinct non-trivial elements of $\Am(X)$. Therefore if $C_3$ is defined then $X'$ is isomorphic to $C_1 \times C_2$, $C_1 \times C_2$ or $C_2 \times C_3$, \mbox{and otherwise $X' \cong C_1 \times C_2 \cong X$.} We have proved Theorem \ref{mainrho2}(1).

If $\Am(X) \cong \mathbb{Z} / 2\mathbb{Z}$ then either $X \cong C \times C$, or $X \cong C \times \Pro^1_{\ka}$, where $C$ is a non-trivial smooth conic. By Lemma \ref{CBLinkF2k} the surface $X$ is birationally equivalent to $C \times C$, $C \times \Pro^1_{\ka}$, and $\ka$-forms of Hirzebruch surfaces $\mathbb{F}_{2k}$ admitting a structure of conic bundle over $C$.

Assume that a minimal surface $X'$ is birationally equivalent to $X$ and not listed before. Then the map $X \dashrightarrow X'$ can be decomposed into a sequence of Sarkisov links:
$$
X = X_0 \dashrightarrow X_1 \dashrightarrow \ldots \dashrightarrow X_{n-1} \dashrightarrow X_n = X'.
$$
In particular there exists a Sarkisov link $f: S \dashrightarrow S'$, such that $S$ is listed in~Lemma~\ref{CBLinkF2k}, and $S'$ is not listed. For geometrically rational surfaces Sarkisov links are described in~\cite[Theorem 2.6]{Isk96}. All surfaces listed in Lemma \ref{CBLinkF2k} admit a structure of a conic bundle over $C$. Therefore $f$ has type $\mathrm{II}$, $\mathrm{III}$ or $\mathrm{IV}$. Links of types $\mathrm{II}$ and $\mathrm{IV}$ preserve~$K_S^2$ and~$\Am(S)$. Therefore in this case $S'$ is one of the surfaces listed \mbox{in Proposition \ref{ClassCB}(2)--(4),} and also in Lemma \ref{CBLinkF2k}. For $S$ with $\rho(S) = 2$ and $K_S^2 = 8$ the only possible link of type~$\mathrm{III}$ is a contraction of $(-1)$-section on a $\ka$-form of a Hirzebruch surface $\mathbb{F}_1$. But such surface is not listed in Lemma \ref{CBLinkF2k}, therefore this link is~impossible, and $X'$ must be isomorphic to a surface listed in Lemma \ref{CBLinkF2k}. We have proved Theorem \ref{mainrho2}(2).
\end{proof}

\section{Sarkisov links for the case $\rho(X) = 1$}

In this section we consider Sarkisov links for pointless del Pezzo surfaces of degree $8$ with the Picard number $1$, and prove Theorem \ref{mainrho1}.

Let $X$ be a pointless del Pezzo surface of degree $8$ with the Picard number $1$. By~Lemma~\ref{Class1} in this case $X$ is isomorphic to $R_{\LL/\ka} C$, where $\LL$ is a quadratic extension of~$\ka$ and $C$ is a conic over $\LL$. The field $\LL$ is uniquely determined by~$X$. \mbox{The pair $\left( \LL, \Am(X_{\LL}) \right)$} is a biregular invariant of $X$ by Theorem \ref{BBreg}. We want to find possibilities for minimal surfaces $X'$ birationally equivalent to $X$.

Note that by Corollary \ref{Class2} one has $X_{\LL} \cong C \times C'$, where $C'$ is a conic conjugate to $C$ under the action of $\Gal \left( \LL / \ka \right)$. Note that $X$ is pointless if and only if $X_{\LL}$ is pointless too. In particular, if $X$ is pointless then $\Am(X_{\LL})$ is non-trivial.

\begin{lemma}
\label{DPpoints}
Let $X$ be a pointless del Pezzo surface of degree $8$ with $\rho(X) = 1$. Then any point on $X$ has even degree.
\end{lemma}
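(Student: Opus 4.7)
The plan is to use the Weil restriction description $X \cong R_{\LL/\ka} C$ from Lemma \ref{Class1} together with the fact, noted just before the statement, that $X$ being pointless forces $X_{\LL} \cong C \times C'$ to be pointless (Corollary \ref{Class2}). Since $C$ and $C'$ are $\Gal(\LL/\ka)$-conjugate, they are pointless over $\LL$ simultaneously; hence both $C$ and $C'$ are \emph{non-trivial} conics over $\LL$, and so every closed point of $C$ (respectively $C'$) has even degree over $\LL$ (because a conic has a closed point of odd degree if and only if it has a rational point).

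Suppose for contradiction that $P$ is a closed point of $X$ of odd degree $d$. Since $[\LL:\ka] = 2$ does not divide $d$, the field $\LL$ is not contained in the residue field $\ka(P)$, and therefore $\ka(P) \otimes_{\ka} \LL$ is a field of degree $d$ over $\LL$. This means that the base change $P \times_{\ka} \LL$ is a single closed point $\widetilde{P}$ of $X_{\LL}$ whose residue field has $\LL$-degree $d$ (still odd).

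Now consider the projection $\pi \colon X_{\LL} \cong C \times C' \to C$, and let $Q = \pi(\widetilde{P})$. Then $Q$ is a closed point of the conic $C$, and the residue field extension $\ka(\widetilde{P})/\LL$ factors through $\ka(Q)/\LL$, so $[\ka(Q):\LL]$ divides $d$ and is therefore odd. This contradicts the fact that every closed point of the non-trivial conic $C$ has even degree over $\LL$, and completes the proof.

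There is no serious obstacle here; the only subtle point is the bookkeeping between degrees over $\ka$ and degrees over $\LL$. The odd-degree hypothesis automatically guarantees $\LL \not\subset \ka(P)$, which keeps $P_{\LL}$ a single closed point (rather than splitting into two conjugate points of half the degree), and this is exactly what lets us transfer the odd-degree condition to a closed point of $C$ and invoke non-triviality of $C$.
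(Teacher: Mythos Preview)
Your proof is correct and follows essentially the same route as the paper's: assume an odd-degree closed point on $X$, pass to $X_{\LL}\cong C\times C'$, project to the conic $C$, and contradict the fact that a non-trivial conic has no closed points of odd degree. The only difference is that you spell out explicitly why an odd-degree point on $X$ yields an odd-degree point on $X_{\LL}$ (via the residue-field argument showing $\LL\not\subset\ka(P)$), whereas the paper simply asserts this step.
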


\begin{proof}
Assume that there exists a point of odd degree on $X$. Then there exists a point of~odd degree on $X_{\LL}$, and images of this point under the projection on $C$ and $C'$ are points of odd degree. There are no points of odd degree on any non-trivial conic. \mbox{Therefore $C \cong C' \cong \Pro^1_{\LL}$,} and thus $X\left( \ka \right) \neq \varnothing$.

We obtain a contradiction. Therefore any point on $X$ has even degree.
\end{proof}

Now we want to consider Sarkisov links for the surface $X$. These links are described in~\cite[Theorem 2.6 (i and ii)]{Isk96}. In our case such a link must have tipe $\mathrm{I}$ or $\mathrm{II}$, since~$X$ is~a~del~Pezzo surface with $\rho(X) = 1$. Any link of type $\mathrm{I}$ is a blowup $X_1 \rightarrow X$ of~a~point of~degree~$d$, where $X_1$ has a structure of a conic bundle over a smooth conic and $\rho(X_1) = 2$. Any link of type $\mathrm{II}$ is a composition $\sigma_1 \circ \sigma^{-1}$, where $\sigma: Z \rightarrow X$ is a blowup of~a~point of~degree~$d$, and $\sigma_1: Z \rightarrow X_1$ is a contraction of a set of disjoint conjugate $(-1)$-curves, that differs from the exceptional divisor of $\sigma$. In our case $d = 2$, $d = 4$ or $d = 6$ by~Lemma~\ref{DPpoints}. For $d = 2$ the corresponding link has type $\mathrm{I}$, and for $d = 4$ and $d = 6$ the~corresponding link has type $\mathrm{II}$. For each of these three links we want to describe $X_1$.

\begin{lemma}
\label{DPlink6}
Let $X$ be a del Pezzo surface of degree $8$ with $\rho(X) = 1$. Let $X \dashrightarrow X_1$ be~a~Sarkisov link corresponding to the blowup of a point of degree $6$. Then $X_1 \cong X$.
\end{lemma}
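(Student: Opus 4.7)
The plan is to show $X_1\cong X$ by pushing the Sarkisov link through the Geiser involution on the intermediate surface $Z$.

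First I would unwind the link. Let $\sigma\colon Z\to X$ be the blowup of a closed point of degree~$6$, so that $\rho(Z)=2$ and $K_Z^2=K_X^2-6=2$, with six exceptional $(-1)$-curves $E_1,\ldots,E_6\subset\overline Z$ forming a single Galois orbit. Since the hypothesised Sarkisov link of type~$\mathrm{II}$ requires $\sigma_1\colon Z\to X_1$ to be a second $K$-negative extremal contraction, $-K_Z$ must be ample on the whole Mori cone of $Z$; hence $Z$ is a del Pezzo surface of degree~$2$, and $\overline Z$ is obtained from $\XX\cong\Pro^1_{\kka}\times\Pro^1_{\kka}$ by blowing up six points in general position.

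Next I would call upon the Geiser involution $\gamma\colon Z\to Z$: the deck transformation of the anticanonical double cover $|-K_Z|\colon Z\to\Pro^2_{\ka}$. Because $|-K_Z|$ is canonically defined over $\ka$, so is $\gamma$. On $\Pic(\overline Z)$ the pullback $\gamma^*$ is the isometry fixing $K_Z$ and acting as $-1$ on $K_Z^\perp$; explicitly, $\gamma^*(D)=-D+(D\cdot K_Z)K_Z$. Using $-K_{\overline Z}\sim 2A+2B-\sum_k E_k$ with $A,B$ the ruling classes of $\XX$, applying this to each $E_i$ yields
\[
\gamma^*(E_i)\sim -E_i-K_Z\sim 2A+2B-2E_i-\sum_{j\neq i}E_j,
\]
and a direct intersection check confirms that the six curves $\gamma(E_1),\ldots,\gamma(E_6)$ are pairwise disjoint $(-1)$-curves whose classes differ from every $E_j$.

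Finally I would finish with a Mori-cone argument. Since $\rho(Z)=2$ the Mori cone has exactly two extremal rays; the ray spanned by $\sum_i[E_i]$ is contracted by $\sigma$, and by the previous display it is not preserved by $\gamma$. As $\gamma$ is a $\ka$-automorphism, it must interchange the two extremal rays, so the other one is spanned by $\sum_i[\gamma(E_i)]$ and contracted by $\sigma_1$. Consequently $\sigma\circ\gamma\colon Z\to X$ is an extremal contraction of the same ray as $\sigma_1$, and the universal property of contractions yields a $\ka$-isomorphism $X_1\xrightarrow{\sim} X$.

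The main delicate point will be to verify that $Z$ is genuinely a del Pezzo of degree~$2$, so that the Geiser involution is available; this will follow from the existence of the type~$\mathrm{II}$ Sarkisov link, which forces both extremal rays of $Z$ to be $K$-negative and hence $-K_Z$ to be ample. Once this is established, the remainder of the argument is the short intersection-theoretic computation above.
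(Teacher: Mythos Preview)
Your argument is correct. The paper itself does not give a proof at all: it simply invokes \cite[Theorem~2.6(ii), $K_X^2=8$, $d=6$]{Isk96}, where it is stated that the type~$\mathrm{II}$ link starting from such an $X$ at a point of degree~$6$ is the Geiser involution and returns to $X$. Your write-up is therefore not an alternative route so much as a self-contained unpacking of what lies behind that citation: you show directly that the intermediate surface $Z$ is a degree~$2$ del Pezzo, invoke the $\ka$-rational Geiser involution $\gamma$ of the anticanonical double cover, verify via $\gamma^*(D)=-D+(D\cdot K_Z)K_Z$ that $\gamma$ carries the exceptional sextuple of $\sigma$ to a disjoint sextuple distinct from it, and conclude by a Mori-cone argument that $\gamma$ intertwines the two extremal contractions. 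This buys independence from Iskovskikh's tables at the cost of a short computation; the paper's approach buys brevity at the cost of a black-box reference. One small point worth making explicit in your exposition: the claim that both extremal rays of $Z$ are $K_Z$-negative is exactly the content of the link being of type~$\mathrm{II}$ (for surfaces there are no flips, so $\sigma_1$ must contract a $K$-negative curve), and Kleiman's criterion then gives ampleness of $-K_Z$; you gesture at this in your final paragraph, and it is indeed the only place where the hypothesis on the link is used.
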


\begin{proof}
This case is considered in \cite[Theorem 2.6.(ii), $K_X^2 = 8$, $d = 6$]{Isk96}, and there is~explicitely written that $X_1 \cong X$.
\end{proof}

\begin{lemma}
\label{DPlink4}
Let $X$ be a del Pezzo surface of degree $8$ with $\rho(X) = 1$. Let $X \dashrightarrow X_1$ be~a~Sarkisov link corresponding to the blowup of a point of degree $4$. Then $X_1 \cong X$.
\end{lemma}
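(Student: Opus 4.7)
The plan is to describe the link explicitly over $\kka$, observe that both the splitting field $\LL$ and the Amitsur subgroup $\Am\bigl(X_{\LL}\bigr)$ are preserved, and conclude via Theorem~\ref{BBreg}.

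First I would work over $\kka$. Let $\sigma \colon Z \to X$ be the blowup of the degree-$4$ point; then $\overline{Z}$ is the blowup of $\XX \cong \Pro^1_{\kka} \times \Pro^1_{\kka}$ at four geometric points $p_1, \ldots, p_4$ forming a single $G_{\ka}$-orbit. Denote the two ruling classes by $A, B$ and the exceptional divisors by $E_1, \ldots, E_4$, so that $\overline{Z}$ is a degree-$4$ del Pezzo with $-K_{\overline{Z}} \sim 2A + 2B - \sum E_i$. The sixteen geometric $(-1)$-curves on $\overline{Z}$ are readily enumerated as $E_i$, $A - E_i$, $B - E_i$, and $F_l \sim A + B - \sum_{i \neq l} E_i$. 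The contraction $\sigma_1 \colon Z \to X_1$ must collapse a $G_{\ka}$-invariant set of four pairwise disjoint $(-1)$-curves different from $\{E_1, \ldots, E_4\}$; since $\rho(X) = 1$ forces $G_{\ka}$ to swap $A \leftrightarrow B$ through $\Gal(\LL / \ka)$, the eight curves $\{A - E_i, B - E_i\}$ form a single Galois orbit and contain no admissible four-element invariant subset. Hence the contracted set must be $\{F_1, F_2, F_3, F_4\}$, which is visibly disjoint and $G_{\ka}$-invariant.

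Next I would pin down the rulings of $\overline{X_1}$. A pullback $aA + bB - c \sum E_i$ of a ruling on $\overline{X_1}$ must be orthogonal to each $F_l$, have self-intersection $0$, and intersect $-K_{\overline{Z}}$ in $2$; these conditions force $c = 1$, $a + b = 3$, $ab = 2$, giving the two rulings
\[
A' \sim A + 2B - \textstyle\sum E_i, \qquad B' \sim 2A + B - \textstyle\sum E_i.
\]
Since $\sum E_i$ is $G_{\ka}$-invariant, the Galois action permutes $\{A', B'\}$ exactly as it permutes $\{A, B\}$, through $\Gal(\LL / \ka)$. Together with the easy checks $A' \cdot B' = 1$ and $(A')^2 = (B')^2 = 0$, this both identifies $\overline{X_1} \cong \Pro^1_{\kka} \times \Pro^1_{\kka}$ (ruling out any $\F_n$-form, which would produce a $\ka$-point and contradict minimality of the Sarkisov target) and shows that the splitting field of $X_1$ equals $\LL$.

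Finally, base changing the whole link to $\LL$ exhibits a birational map $X_{\LL} \dashrightarrow X_{1, \LL}$, so by birational invariance of the Amitsur subgroup (Proposition~\ref{Brauer} and the discussion following it) one has $\Am(X_{1, \LL}) = \Am(X_{\LL})$. Theorem~\ref{BBreg} now yields $X_1 \cong X$. The main obstacle I expect is the combinatorial step of excluding alternative invariant configurations of $(-1)$-curves on $\overline{Z}$: the Galois action on $\{E_1, \ldots, E_4\}$ is controlled by the residue field of the blown-up point and need not factor through $\Gal(\LL / \ka)$, so one has to argue separately that the independent swap of $\{A, B\}$ through $\Gal(\LL / \ka)$ is already enough to eliminate every candidate except $\{F_1, \ldots, F_4\}$.
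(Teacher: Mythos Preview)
Your approach is essentially the same as the paper's: identify the contracted curves as the $F_l$, verify that the splitting field $\LL$ is preserved, and conclude via Theorem~\ref{BBreg} together with the birational invariance of $\Am(\,\cdot\,)_{\LL}$. The only difference is in execution: you compute the new rulings $A' \sim A + 2B - \sum E_i$ and $B' \sim 2A + B - \sum E_i$ explicitly and read off that $\Gal(\LL/\ka)$ swaps them, whereas the paper argues more economically by a Picard-rank count, observing that the number $k$ of $\Gal(\kka/\LL)$-orbits on $\{E_i\}$ equals that on $\{F_l\}$ (because $A + B - \sum E_j$ is $\Gal(\kka/\LL)$-invariant), so $\rho\bigl((X_1)_{\LL}\bigr) = \rho(Z_{\LL}) - k = \rho(X_{\LL}) = 2$.

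One small correction to the combinatorial step you flagged: the eight curves $\{A - E_i,\, B - E_i\}$ do \emph{not} always form a single Galois orbit. If the residue field of the degree-$4$ point contains $\LL$, the stabiliser of each $E_i$ lies inside $\Gal(\kka/\LL)$ and the eight curves break into two $G_{\ka}$-orbits of size four. Your conclusion is nevertheless correct: any $G_{\ka}$-invariant four-element subset of these eight curves must contain both some $A - E_i$ and some $B - E_j$ with $i \neq j$ (since the orbit surjects onto $\{A,B\}$ and bijects onto $\{E_1,\dots,E_4\}$, while no element of $G_\ka$ can swap $A\leftrightarrow B$ and simultaneously fix an $E_i$ in this situation), and $(A - E_i)\cdot(B - E_j) = 1$. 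So $\{F_1,\dots,F_4\}$ is indeed the only admissible contraction, and the rest of your argument goes through.
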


\begin{proof}
This case is considered in \cite[Theorem 2.6.(ii), $K_X^2 = 8$, $d = 4$]{Isk96}, and for this case $K_{X_1}^2 = 8$ and $\rho(X_1) = 1$. Therefore by Lemma \ref{Class1} one has $R_{\LL_1 / \ka} C_1$, where $\LL_1$ is~the~splitting field of $X_1$ and $C_1$ is a smooth conic over $\LL_1$. Let us show that $\LL_1 = \LL$. It~is~sufficient to prove that $\rho(\left(X_1\right)_{\LL}) = 2$.

Let $\pi_1: X_{\LL} \rightarrow C$ and $\pi_2: X_{\LL} \rightarrow C'$ be the projections. Let $A$ and $B$ be the classes in~$\Pic(\overline{X})$ of geometric fibres of $\pi_1$ and $\pi_2$ respectively. The map $X \dashrightarrow X_1$ is a composition $\sigma_1 \circ \sigma^{-1}$, where $\sigma: Z \rightarrow X$ is a blowup of a point of degree $4$ and $\sigma_1: Z \rightarrow X_1$ is~a~contraction of a set of disjoint conjugate $(-1)$-curves. Let $E_1$, $E_2$, $E_3$ and $E_4$ be~the~classes in $\Pic(\overline{Z})$ of geometrically irreducible components of the exceptional divisor of $\sigma$.

The surface $Z$ is a del Pezzo surface of degree $4$, and thus there are $16$ $(-1)$-curves on~$Z$, having classes $E_i$, $A-E_i$, $B-E_i$ and $A+B+E_i-\sum \limits_{j = 1}^4 E_j$ in $\Pic(\overline{Z})$. \mbox{The four $(-1)$-curves} with classes $A+B+E_i-\sum \limits_{j = 1}^4 E_j$ are conjugate and disjoint, therefore $\sigma_1:Z \rightarrow X_1$ is~the~contraction of these curves.

Let $k$ be the number of $\Gal\left( \kka / \LL \right)$-orbits on the set $E_i$ (actually $k = 1$ or $k = 2$). Then the number of $\Gal\left( \kka / \LL \right)$-orbits on the set $A+B+E_i-\sum \limits_{j = 1}^4 E_j$ is also $k$, since $A+B-\sum \limits_{j = 1}^4 E_j$ is $\Gal\left( \kka / \LL \right)$-invariant. Therefore
$$
\rho(\left(X_1\right)_{\LL}) = \rho(Z_{\LL}) - k  = \rho(X_{\LL}) = 2.
$$

The del Pezzo surfaces $X$ and $X_1$ of degree $8$ with $\rho(X) = \rho(X_1) = 1$ have the same splitting field and $\Am(X_{\LL}) = \Am((X_1)_{\LL})$, since $X \approx X_1$. Therefore $X \cong X_1$ by Theorem~\ref{BBreg}.

\end{proof}

\begin{remark}
Note that in Lemmas \ref{DPlink6} and \ref{DPlink4} the surface $X$ is not necessary pointless.
\end{remark}

The link of type $\mathrm{I}$ corresponding to the blowup $Z \rightarrow X$ of two conjugate geometric points is considered in \cite[Theorem 2.6.(i), $K_X^2 = 8$]{Isk96}. In this case $Z$ admits a~structure of a conic bundle $\pi: Z \rightarrow C$ with two degenerate geometric fibres, where~$C$ is~a~conic such that $b(C)$ is the generator of $\Am(X)$ (in particular, $C \cong \Pro^1_{\ka}$ if $\Am(X)$ is~trivial). In~the~following proposition we collect some facts about such conic bundles following~\cite{Isk79} and~\cite{Isk96}.

\begin{proposition}
\label{CB6}
Let $\pi: S \rightarrow C$ be a conic bundle over a smooth conic $C$ with two degenerate geometric fibres, such that $\rho(S) = 2$. Then $K_S^2 = 6$, and $S$ has the following properties.

\begin{enumerate}

\item[(i)] The surface $S$ is a del Pezzo surface of degree $6$ (see \cite[Theorem 5]{Isk79}).

\item[(ii)] There are six $(-1)$-curves on $S$: four components of the singular fibres, and two disjoint conjugate sections $E_1$ and $E_2$ of $\pi$. These curves form a hexagon.

\item[(iii)] The surface $S$ is not minimal (see \cite[Theorem 4]{Isk79}).

\item[(iv)] There exists a unique link $S \rightarrow Y$ of type $\mathrm{III}$ that is the contraction of~the~pair~$E_1$ and $E_2$. The surface $Y$ is a del Pezzo surface of degree $8$ with $\rho(Y) = 1$ (see \cite[Theorem 2.6(iii)]{Isk96}). 

\item[(v)] Any other link $S \dashrightarrow S_1$ has type $\mathrm{II}$, and preserves a structure of a conic bundle over~$C$ (see \cite[Theorem 2.6]{Isk96}). Such link is a composition $\sigma_1 \circ \sigma^{-1}$, where~$\sigma$ is~a~blowup of a point of degree $d$ such that the corresponding geometric points do~not~lie on the degenerate fibres, and $\sigma_1$ is the contraction of~the~proper transforms of $d$ fibres containing these points. In particular, the conic bundles~$S \rightarrow C$ and~$S_1 \rightarrow C$ have degenerate fibres over the same points on $C$, $\rho(S_1) = 2$ and~$K_{S_1}^2 = 6$. Therefore all described above properties hold for $S_1$.

\end{enumerate}

\end{proposition}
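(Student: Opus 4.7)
The plan is to combine classical results of Iskovskikh with a direct analysis of the geometric Picard lattice of $\overline S$. First I would establish $K_S^2 = 6$: over $\kka$, the surface $\overline S$ is the blowup of $\Pro^1_{\kka}\times\Pro^1_{\kka}$ at two points, one on each geometric degenerate fibre of $\pi$, so $K_S^2 = 8 - 2 = 6$. Part (i) then follows from \cite[Theorem 5]{Isk79}, which guarantees that a conic bundle with at most three degenerate geometric fibres is del Pezzo; alternatively one verifies ampleness of $-K_S$ by intersecting with every $(-1)$-curve on $\overline S$.

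For (ii), I would enumerate the $(-1)$-curves of $\overline S$ directly. A del Pezzo surface of degree $6$ carries exactly six such curves, forming the well-known hexagon. In our situation four of them are the components of the two degenerate fibres, and the remaining two are the proper transforms $E_1$, $E_2$ of the two curves contracted by $\overline S\to \Pro^1_{\kka}\times\Pro^1_{\kka}$; these are disjoint sections of $\pi$, since each meets every fibre transversally in one point. Since $\rho(S) = \operatorname{rk}\Pic(\overline S)^{G_{\ka}} = 2$ while $\operatorname{rk}\Pic(\overline S) = 4$, the Galois action must identify classes non-trivially, and a short inspection of the hexagon — using that the classes of a fibre and of a general section are Galois-invariant — implies that $\{E_1, E_2\}$ is a Galois-stable unordered pair with $E_1\neq E_2$, i.e.\ $E_1$ and $E_2$ are conjugate.

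Part (iii) is then immediate: $\{E_1, E_2\}$ is a Galois-invariant collection of disjoint $(-1)$-curves, so equivariant contraction produces a non-trivial birational morphism; equivalently, Theorem \ref{Minclass} already excludes $K^2 = 6$ for minimal conic bundles with $\rho = 2$. For (iv) this contraction yields $Y$ with $K_Y^2 = K_S^2 + 2 = 8$ and $\rho(Y) = \rho(S)-1 = 1$, and $Y$ is del Pezzo since $-K_Y$ is the pushforward of $-K_S - E_1 - E_2$, whose ampleness one checks from that of $-K_S$. Uniqueness of this type-III link follows from the hexagon: the three disjoint pairs of $(-1)$-curves are $\{E_1,E_2\}$ together with two pairs each consisting of one component from each singular fibre; one verifies that in either possible Galois-orbit structure (the two fibres Galois-swapped, or each individually defined over $\ka$ with components swapped) the two latter pairs are interchanged by Galois, so only $\{E_1, E_2\}$ is Galois-stable. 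Part (v) is a direct application of \cite[Theorem 2.6]{Isk96}: $\rho(S) = 2$ rules out links of type~I, the unique type-III link is the one above, and every other link must be of type~II preserving the conic bundle structure over $C$, with the explicit form given by loc.\ cit.; in particular $K^2$, $\rho$ and the set of critical values of $\pi$ are preserved, so $S_1$ again satisfies (i)--(iv).

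The main obstacle I anticipate is the Galois bookkeeping in (ii) and in the uniqueness claim of (iv): one must handle uniformly the two possible Galois-orbit structures on the degenerate fibres and verify in each case that $\{E_1, E_2\}$ is the unique Galois-invariant disjoint pair among the six $(-1)$-curves in the hexagon. Everything else is essentially a direct read-off from the cited classification of Sarkisov links.
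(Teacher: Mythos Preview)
The paper does not give a proof of this proposition at all: it is stated as a summary of known facts, each item carrying a citation to \cite{Isk79} or \cite{Isk96}, and is used as a black box in the subsequent lemmas. Your proposal therefore supplies strictly more than the paper does, and your overall strategy --- computing $K_S^2$ from the number of degenerate fibres, identifying the six $(-1)$-curves on the hexagon, analysing the possible Galois actions compatible with $\rho(S)=2$ and a fixed fibre class, and then reading off the Sarkisov links from Iskovskikh's tables --- is correct and is exactly the verification the citations are meant to encode.

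One small slip in your description for (ii): if you realise $\overline S$ as the blowup of $\Pro^1_{\kka}\times\Pro^1_{\kka}$ at two points with $\pi$ induced by one of the two projections, then the exceptional divisors of that blowup are \emph{components of the degenerate fibres}, not sections. The two $(-1)$-sections $E_1,E_2$ are instead the proper transforms of the rulings of the \emph{other} projection through the blown-up points. This does not affect your argument, since what matters is only that the six $(-1)$-curves split as four fibre components and two disjoint sections, which you use correctly thereafter. For (v) it is also worth making explicit why no link of type~$\mathrm{IV}$ occurs: with $\rho(S)=2$ the Mori cone has exactly two extremal rays, one spanned by the fibre class $F$ and the other by $E_1+E_2$, and since $(E_1+E_2)^2=-2$ the second ray gives the birational contraction of (iv) rather than a second conic bundle structure. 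Everything else in your sketch is in order.
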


Now we can consider the link of type $\mathrm{I}$.

\begin{lemma}
\label{DPlink2}
Let $X$ be a pointless del Pezzo surface of degree $8$ with $\rho(X) = 1$. Consider a sequence of Sarkisov links
$$
X \leftarrow Z = Z_0 \dashrightarrow Z_1 \dashrightarrow \ldots \dashrightarrow Z_{n-1} \dashrightarrow Z_n = Z' \rightarrow X',
$$ 
where the first link has type $\mathrm{I}$, the last link has type $\mathrm{III}$, and all other links have type $\mathrm{II}$. Then $X' \cong X$.

\end{lemma}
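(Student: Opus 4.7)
The plan is to apply Theorem \ref{BBreg}, reducing $X \cong X'$ to verifying that $X$ and $X'$ have the same splitting field $\LL$ and that $\Am(X_{\LL}) = \Am(X'_{\LL})$.

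First I would record the shape of the sequence. The opening type $\mathrm{I}$ link must be the blowup of a point of degree $2$ on $X$ (the only degree compatible with type $\mathrm{I}$ from a pointless del Pezzo surface of degree $8$ with $\rho = 1$, by Lemma \ref{DPpoints} and the classification recalled before Proposition \ref{CB6}), producing $Z = Z_0$: a del Pezzo surface of degree $6$ with $\rho(Z_0) = 2$ and a conic bundle $\pi_0 \colon Z_0 \to C$ with two degenerate geometric fibres. By Proposition \ref{CB6}(v), every intermediate type $\mathrm{II}$ link preserves both the base $C$ and the locations of the two degenerate fibres, so each $Z_i$ fits the description of Proposition \ref{CB6}. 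The final type $\mathrm{III}$ link is forced to be the unique contraction of the conjugate pair of $(-1)$-sections of $\pi_n \colon Z_n \to C$, by Proposition \ref{CB6}(iv), so $X'$ is a del Pezzo surface of degree $8$ with $\rho(X') = 1$.

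Next, birational invariance of the Amitsur subgroup gives $\Am(X_F) = \Am(X'_F)$ for every extension $F/\ka$, since $X \approx X'$ base-changes to $X_F \approx X'_F$. In particular this will hold for $F = \LL$, the splitting field of $X$, once the splitting fields have been identified.

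The heart of the proof is to show that $\LL$ is also the splitting field of $X'$, that is, $\rho(X'_{\LL}) = 2$. Since $X$ is pointless, Corollary \ref{Class2} tells us $X_{\LL}$ is a pointless del Pezzo surface of degree $8$ over $\LL$ with $\rho(X_{\LL}) = 2$. Theorem \ref{mainrho2}, applied over $\LL$, lists all minimal surfaces birationally equivalent to $X_{\LL}$, and in each of its two cases every such minimal surface has Picard rank $2$ over $\LL$. Now suppose for contradiction that the splitting field of $X'$ differs from $\LL$: then $X'_{\LL}$ is a del Pezzo surface of degree $8$ over $\LL$ with Picard rank $1$, hence automatically minimal over $\LL$; being birationally equivalent to $X_{\LL}$, it must appear in the list produced by Theorem \ref{mainrho2}, which is impossible because nothing there has Picard rank $1$. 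Therefore $\LL$ is the splitting field of both $X$ and $X'$.

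Combining the equality of splitting fields with $\Am(X_{\LL}) = \Am(X'_{\LL})$, Theorem \ref{BBreg} yields $X' \cong X$. I expect the only delicate point to be the contradiction step above: it relies essentially on the exhaustiveness of the list in Theorem \ref{mainrho2}, which rules out Picard rank $1$ among minimal surfaces birational to the pointless $X_{\LL}$, and on the fact that $X'_{\LL}$ is automatically minimal as soon as its Picard rank is $1$.
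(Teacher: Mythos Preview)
Your proof is correct and takes a genuinely different route from the paper's. The paper argues by directly tracking the hexagon of $(-1)$-curves on $Z$ through the chain of type~$\mathrm{II}$ links: writing the components of the two degenerate fibres as $A_1,A_2,B_1,B_2$ and the two $(-1)$-sections as $E_1,E_2$, it uses the evenness of every blown-up degree (Lemma~\ref{DPpoints}) in a parity computation on self-intersections to show that the proper transform $R$ of a $(-1)$-section $E'_1$ of $Z'$ still satisfies $R\cdot B_1 = 1$. This forces the $\Gal(\kka/\LL)$-action on the six $(-1)$-curves of $Z'$ to match that on $Z$, whence $\rho(X'_{\LL}) = 2$. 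Your argument bypasses this computation entirely by base-changing to $\LL$ and invoking the already-proved Theorem~\ref{mainrho2}, whose list of minimal models of the pointless $X_{\LL}$ contains nothing of Picard rank~$1$. Your approach is shorter and more conceptual; the paper's explicit computation, on the other hand, makes visible exactly where pointlessness enters (through the parity of blow-up degrees) and is reused verbatim in the remark following the lemma to show that the conclusion genuinely fails once $X(\ka)\neq\varnothing$.
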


\begin{proof}
The surface $Z$ admits a structure of a conic bundle over a smooth conic $\pi:Z \rightarrow C$ with two degenerate fibres. Denote the components of these fibres by $A_1$, $A_2$, $B_1$ and $B_2$. These components are $(-1)$-curves on $Z$. By Proposition \ref{CB6}(ii) there are two disjoint conjugate sections $E_1$ and $E_2$ of $\pi$ that are $(-1)$-curves. Without loss of generality we~can assume that
$$
A_1 \cdot E_1 = E_1 \cdot B_1 = B_1 \cdot A_2 = A_2 \cdot E_2 = E_2 \cdot B_2 = B_2 \cdot A_1 = 1.
$$
Let $\Gamma$ be the kernel of the action of $\Gal\left(\kka / \ka\right)$ on the set of $(-1)$-curves on $Z$, and $\mathbb{F} = \kka^{\Gamma}$. Then $\rho\left(Z_{\mathbb{F}}\right) = \rho\left(\overline{Z}\right) = 4$, and $\rho(X_{\mathbb{F}}) = 2$. Therefore $\LL$ is a subfield of $\mathbb{F}$. Note that the~pairs $A_1$ and $A_2$, $B_1$ and $B_2$, and $E_1$ and $E_2$ are defined over $\LL$, but no one of these curves is defined over $\LL$, since $X_{\LL}$ is pointless. Therefore the group $\Gal\left(\mathbb{F} / \LL \right)$ has order~$2$ and pairwisely permutes $E_1$ and $E_2$, $A_1$ and $A_2$, and $B_1$ and $B_2$.

By Proposition \ref{CB6}(v) all surfaces $Z_i$ are del Pezzo surfaces of degree $6$ admitting a~structure of a conic bundle $Z_i \rightarrow C$ with two degenerate fibres. Let $f: Z \dashrightarrow Z'$ be the composition of the Sarkisov links $Z_i \dashrightarrow Z_{i+1}$. Then $A'_1 = f(A_1)$, $A'_2 = f(A_2)$, $B'_1 = f(B_1)$ and~$B'_2 = f(B_2)$ are components of the degenerate fibres of $Z' \rightarrow C$. By~Proposition \ref{CB6}(ii) there are two disjoint conjugate sections $E'_1$ and $E'_2$ of $\pi': Z' \rightarrow C$ that are $(-1)$-curves. We can assume that $A'_1 \cdot E'_1 = 1$, and want to show that $E'_1 \cdot B'_1 = 1$.

Let $R = f^{-1}_*\left(E'_1\right)$ be the proper transform of $E'_1$ on $Z$. If we blow up a point $p$ on $\overline{Z}$ and contract a proper transform of the fibre containing $p$, then for the proper transform~$\widetilde{R}$ of~$R$ one has $\widetilde{R}^2 - R^2 = \pm 1$ ($-1$ if $p \in R$, and $1$ otherwise). By Proposition~\ref{CB6}(v) any link $Z_i \dashrightarrow Z_{i+1}$ is a composition $\sigma_1 \circ \sigma^{-1}$, where $\sigma$ is a blowup of a point of~degree~$d$ such that the corresponding geometric points do not lie on degenerate fibres, and $\sigma_1$ is~the~contraction of the proper transforms of $d$ fibres containing these points. Moreover, $d$ is even, since otherwise there is a point of odd degree on $X$ that is impossible by~Lemma~\ref{DPpoints}. Thus $R^2 - \left(E'_1\right)^2$ is even, and $R^2$ is odd.

Note that the group $\Pic(\overline{Z}) \cong \mathbb{Z}^4$ is generated by $A_1$, $E_1$, $B_1$ and the class \mbox{of a fibre $F \sim A_1 + B_2 \sim A_2 + B_1$.} Therefore $R \sim aA_1 + eE_1 + bB_1 + fF$. The~curve~$R$ is a section of $\pi: Z \rightarrow C$, thus $e = 1$, since $F^2 = F \cdot A_1 = F \cdot B_1 = 0$ and $F \cdot E_1 = 1$. One \mbox{has $A_1 \cdot R = A'_1 \cdot E'_1 = 1$,} thus $a = 0$, since $A_1 \cdot B_1 = A_1 \cdot F =0$, $A_1 \cdot E_1 = 1$ and~$A_1^2=-1$. Therefore $R \sim E_1 + bB_1 + fF$, and
$$
R^2 = E_1^2 + b^2B_1^2 +f^2F^2 +2bE_1 \cdot B_1 + 2fE_1 \cdot F + 2bfB_1 \cdot F = -1 - b^2 + 2b +2f.
$$
The number $b$ is even, since the intersection number $R^2$ is odd. One has
$$
B_1 \cdot R = B_1 \cdot \left( E_1 + bB_1 + fF \right) = 1 - b.
$$
Therefore $b = 0$, since $b$ is even and $B_1 \cdot R \geqslant 0$. Thus $E'_1 \cdot B'_1 = R \cdot B_1 = 1$.

Now we see that for the set of $(-1)$-curves on $Z'$ we have
$$
A'_1 \cdot E'_1 = E'_1 \cdot B'_1 = B'_1 \cdot A'_2 = A'_2 \cdot E'_2 = E'_2 \cdot B'_2 = B'_2 \cdot A'_1 = 1.
$$
The group $\Gal\left(\mathbb{F} / \LL \right)$ has order $2$ and pairwisely permutes $A'_1$ and $A'_2$, $B'_1$ and $B'_2$, and~also~$E'_1$ and $E'_2$. Therefore $\rho\left(Z'_{\LL}\right) = 3$. By Proposition \ref{CB6}(iv) the map $Z' \rightarrow X'$ is~the~contraction of the disjoint conjugate curves $E'_1$ and $E'_2$. Each of those curves is~not~defined over $\LL$, therefore $\rho\left(X'_{\LL}\right) = 2$. It means that $\LL$ is the splitting field of~$X'$. One has $\Am(X_{\LL}) = \Am(X'_{\LL})$, since $X \approx X'$. Therefore $X \cong X'$ by~Theorem~\ref{BBreg}.
\end{proof}

\begin{remark}

Note that Lemma \ref{DPlink2} does not work if $X(\ka) \neq \varnothing$. Consider a sequence of~Sarkisov links 
$$
X \leftarrow Z \dashrightarrow Z' \rightarrow X',
$$ 
where the first link has type $\mathrm{I}$ and the last link has type $\mathrm{III}$. The second link corresponds to the blowup of a $\ka$-point $p$ on a smooth fibre of $Z \rightarrow C$ and the contraction of~the~proper transform of the fibre passing through $p$, and has type $\mathrm{II}$. Let us show that $X$ and $X'$ are not isomorphic to each other. We use the notation established in the proof of Lemma \ref{DPlink2} for the $(-1)$-curves on $Z$ and $Z'$. We want to show that $E'_1 \cdot B'_1 = 0$.

Let $R$ be the proper transform of $E'_1$ on $Z$. Then $R^2$ is even, and $R \sim E_1 + bB_1 + fF$. Thus
$$
R^2 = E_1^2 + b^2B_1^2 +f^2F^2 +2bE_1 \cdot B_1 + 2fE_1 \cdot F + 2bfB_1 \cdot F = -1 - b^2 + 2b +2f.
$$
The number $b$ is odd, since the intersection number $R^2$ is even. One has
$$
B_1 \cdot R = B_1 \cdot \left( E_1 + bB_1 + fF \right) = 1 - b.
$$
Therefore $b = 1$, since $b$ is odd and $B_1 \cdot R \geqslant 0$. Thus $E'_1 \cdot B'_1 = R \cdot B_1 = 0$.

Now we see that for the set of $(-1)$-curves on $Z'$ we have
$$
A'_1 \cdot E'_1 = E'_1 \cdot A'_2 = A'_2 \cdot B'_1 = B'_1 \cdot E'_2 = E'_2 \cdot B'_2 = B'_2 \cdot A'_1 = 1.
$$
The group $\Gal\left(\mathbb{F} / \LL \right)$ has order $2$ and pairwisely permutes $A'_1$ and $A'_2$, $B'_1$ and $B'_2$, and~the~$(-1)$-curves $E'_1$ and $E'_2$ are defined over $\LL$. Therefore $\rho\left(Z'_{\LL}\right) = 3$. By Proposition~\ref{CB6}(iv) the map $Z' \rightarrow X'$ is the contraction of the disjoint conjugate curves $E'_1$ and~$E'_2$. Each of those curves is defined over $\LL$, therefore $\rho\left(X'_{\LL}\right) = 1$, that means that~$\LL$ is not the splitting field of $X'$. Hence $X$ and $X'$ are not isomorphic to each other by~Theorem~\ref{BBreg}.

\end{remark}

As a by-product of Lemma \ref{DPlink2} we obtain the following proposition.

\begin{proposition}
\label{DP6rigid}
Let $Z$ be a pointless del Pezzo surface of degree $6$ admitting a structure of a relatively minimal conic bundle $Z \rightarrow C$, and $Z \dashrightarrow Z_1$ be a Sarkisov link of type $\mathrm{II}$. Then $Z \cong Z_1$.
\end{proposition}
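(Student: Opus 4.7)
The plan is to reduce to Lemma \ref{DPlink2} by contracting to del Pezzo surfaces of degree $8$. By Proposition \ref{CB6}(iv), the surface $Z$ admits a unique Sarkisov link of type $\mathrm{III}$: the contraction of the disjoint conjugate sections $E_1, E_2$ onto a pointless del Pezzo surface $Y$ of degree $8$ with $\rho(Y) = 1$. Proposition \ref{CB6}(v) guarantees that $Z_1$ inherits every structural property described in Proposition \ref{CB6}, and in particular also admits a unique type $\mathrm{III}$ contraction to a pointless del Pezzo surface $Y_1$ of degree $8$ with $\rho(Y_1) = 1$.

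The next step is to consider the composite sequence
$$
Y \leftarrow Z \dashrightarrow Z_1 \rightarrow Y_1,
$$
in which the first morphism (read as a link from $Y$ to $Z$) has type $\mathrm{I}$, the middle link has type $\mathrm{II}$ by hypothesis, and the last link has type $\mathrm{III}$. This is exactly the setup of Lemma \ref{DPlink2} with $n = 1$, so that lemma applies and yields $Y \cong Y_1$. Along the way the proof of Lemma \ref{DPlink2} also establishes that $Z_1$ has the same splitting field $\LL$ as $Z$, and that the Galois action on its hexagon of $(-1)$-curves reproduces the incidence pattern present on $Z$ (namely $A'_1 \cdot E'_1 = E'_1 \cdot B'_1 = 1$ and so on).

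To upgrade $Y \cong Y_1$ to $Z \cong Z_1$, I would argue that both $Z$ and $Z_1$ are blowups of $Y \cong Y_1$ at a degree $2$ point, and that the two points lie in the same $\Aut(Y)$-orbit. Proposition \ref{CB6}(v) tells us that the conic bundle structures $Z \rightarrow C$ and $Z_1 \rightarrow C$ share the same base and the same degenerate fibre locus on $C$. Since blowing up a degree $2$ point on $Y$ produces a canonical conic bundle via Proposition \ref{CB6}(iv), the conic $C$ together with its degenerate fibre locus should recover the blowup point on $Y$ up to $\Aut(Y)$, forcing the points contracted by $Z \rightarrow Y$ and $Z_1 \rightarrow Y_1$ into a common $\Aut(Y)$-orbit and yielding $Z \cong Z_1$.

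I expect this last step to be the main obstacle: the isomorphism $Y \cong Y_1$ produced by Lemma \ref{DPlink2} is abstract (it comes from matching the invariant $(\LL, \Am(Y_{\LL}))$ via Theorem \ref{BBreg}), so a priori it need not be compatible with the conic bundle data. Fleshing out the dictionary between degree $2$ points of $Y$ and conic bundle structures on their blowups, and showing that the conic $C$ together with its degenerate locus determines the point up to automorphism, is the real work. I would carry this out by passing to the splitting field $\LL$, where $Y_{\LL}$ is a product of two conics, and tracking the action of $\Aut(Y)$ on pairs of conjugate $\LL$-points of $Y_{\LL}$.
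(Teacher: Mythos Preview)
Your strategy matches the paper's: contract $Z$ and $Z_1$ to degree-$8$ del Pezzo surfaces via Proposition \ref{CB6}(iv), invoke Lemma \ref{DPlink2} to obtain $Y\cong Y_1$, and then show that the two degree-$2$ centres lie in a single $\Aut(Y)$-orbit. You have also correctly located the real work in this last step.

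The paper completes it not through the conic-bundle dictionary you sketch but by proving a direct transitivity statement (Lemma \ref{Quadrictrans}): for a pointless degree-$8$ del Pezzo surface $S$ with $\rho(S)=1$, the group $\Aut(S)$ acts transitively on the degree-$2$ points with a prescribed residue field $\ka(\sqrt a)$. The proof runs exactly along the line you propose in your final sentence---one uses $S\cong R_{\LL/\ka}C$ to identify $\Aut^0(S)$ with $\Aut(C)$ and reduces to the transitivity of $\Aut(C)$ on pairs of conjugate points of a conic over $\LL$ with fixed residue field (Lemma \ref{Conictrans}), which is a two-line coordinate computation in $\Pro^2_\LL$. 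That the two centres share a residue field follows from the hexagon analysis already carried out inside Lemma \ref{DPlink2}: since the type-$\mathrm{II}$ link sends $A_i,B_i$ to $A'_i,B'_i$ and $E'_1$ is forced to be the unique section meeting $A'_1$ and $B'_1$, the Galois stabilisers of $E_1$ and of $E'_1$ coincide. Your intermediate attempt to recover the centre from the pair $(C,\text{degenerate locus})$ is a detour that would in the end require the same transitivity; and note that $Y_\LL$ is pointless by Corollary \ref{Class2}, so the phrase ``conjugate $\LL$-points of $Y_\LL$'' needs adjusting---the centres live over a quadratic extension of $\ka$ distinct from $\LL$, and it is this extension that indexes the $\Aut(Y)$-orbits.
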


We decompose the proof into several lemmas that are interesting themselves.

\begin{lemma}
\label{Conictrans}
Let $C$ be a conic over a perfect field $\LL$. Then the group $\Aut(C)$ transitively acts on the set of points of degree $2$, such that the corresponding geometric points are defined over a quadratic extension $\LL\left(\zeta\right)$.
\end{lemma}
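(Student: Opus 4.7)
If $C$ carries no point of degree $2$ whose geometric points are defined over $\LL(\sqrt{a})$, the statement is vacuous, so assume at least one such point $p_0$ exists. The plan is to use $p_0$ as a reference point and, for any second such point $p$, to exhibit an explicit element of $\Aut(C)(\LL)$ carrying $p_0$ to $p$; transitivity of the action on the whole set then follows by composition.

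First I will observe that the existence of $p_0$ forces $C \otimes_{\LL} \LL(\sqrt{a})$ to acquire an $\LL(\sqrt{a})$-point, so $C_{\LL(\sqrt{a})} \cong \Pro^1_{\LL(\sqrt{a})}$; and I will fix such an isomorphism sending the two geometric points of $p_0$ to $0$ and $\infty$. Under this identification the nontrivial element $\sigma \in \Gal\left(\LL(\sqrt{a})/\LL\right)$ acts on $\Pro^1_{\LL(\sqrt{a})}$ by the semilinear twist coming from the $\LL$-structure of $C$: it swaps $0$ and $\infty$ (they constitute the $\LL$-divisor $p_0$) and squares to the identity. A short calculation with M\"obius transformations then shows that any such involution has the form $\sigma(z) = \alpha / \bar{z}$ for some $\alpha \in \LL^{\ast}$. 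Consequently the geometric points of any other degree $2$ point $p$ with residue field $\LL(\sqrt{a})$ form a pair $\{w, \alpha/\bar{w}\}$ with $w \in \LL(\sqrt{a}) \setminus \{0, \infty\}$ and $w \bar{w} \neq \alpha$.

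The substantive step of the proof is to write down the M\"obius transformation
$$
\varphi(z) = \frac{w z + \alpha}{z + \bar{w}},
$$
which by inspection satisfies $\varphi(\infty) = w$ and $\varphi(0) = \alpha / \bar{w}$, hence carries $p_0$ to $p$. The only real obstacle will be verifying that $\varphi$ is defined over $\LL$ for the twisted $\LL$-structure of $C$, equivalently $\sigma \varphi = \varphi \sigma$ in $\operatorname{PGL}_2\left(\LL(\sqrt{a})\right)$. This reduces to observing that both $\sigma \varphi(z)$ and $\varphi \sigma(z)$ simplify, using only $\alpha \in \LL$, to the common expression $\alpha(\bar{z} + w) / (\bar{w} \bar{z} + \alpha)$. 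Once this is in hand, $\varphi \in \Aut(C)(\LL)$ carries $p_0$ to $p$ and the lemma follows.
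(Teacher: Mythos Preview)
Your argument is correct. The key checks --- that the semilinear involution fixing the $\LL$-structure of $C$ and swapping $0,\infty$ must take the form $\sigma(z)=\alpha/\bar z$ with $\alpha\in\LL^\ast$, that $\varphi(z)=(wz+\alpha)/(z+\bar w)$ has nonzero determinant $w\bar w-\alpha$, and that $\sigma\varphi=\varphi\sigma$ using $\bar\alpha=\alpha$ --- all go through exactly as you indicate, so $\varphi$ descends to $\Aut(C)(\LL)$ and carries $p_0$ to $p$.

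This is, however, a genuinely different route from the paper. The paper never leaves the plane model: it embeds $C\hookrightarrow\Pro^2_\LL$, takes the two $\LL$-rational secant lines through the conjugate pairs to be $x=0$ and $y=0$, normalises the four points to $(0:\pm\sqrt a:1)$ and $(\pm\sqrt a:0:1)$, observes that $C$ is then cut out by $x^2+y^2-az^2=\theta xy$, and uses the visible symmetry $x\leftrightarrow y$ as the required automorphism. Your approach instead passes to $\Pro^1$ over $\LL(\sqrt a)$ and records the $\LL$-structure as a cocycle, then produces an explicit M\"obius transformation commuting with it. The paper's proof is shorter and needs no descent formalism; yours avoids any ambient embedding, yields a closed formula for the automorphism in terms of $w$ and $\alpha$, and makes the cocycle picture explicit, which dovetails nicely with the twisted-form viewpoint used elsewhere in the paper.
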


\begin{proof}
Assume that $\zeta$ satisfies the equation $\zeta^2 + a \zeta + b = 0$, and $\bar{\zeta}$ be the other root of~this equation.

Let $\left(p_1, p_2\right)$ and $\left(q_1, q_2\right)$ be two pairs of conjugate points on $C$ such that the~points~$p_1$, $p_2$, $q_1$ and $q_2$ are defined over $\LL\left(\zeta\right)$. Consider an embedding $C \hookrightarrow \Pro^2_{\LL}$. The two lines passing through $\left(p_1, p_2\right)$ and $\left(q_1, q_2\right)$ are defined over $\LL$. Therefore we~can choose coordinates in~$\Pro^2_{\LL}$ such that these lines are given by $x = 0$ and $y = 0$, and~the~homogeneous coordinates of~$p_1$, $p_2$, $q_1$ and $q_2$ are
$$
(0: \zeta: 1), \qquad (0: \bar{\zeta}: 1), \qquad (\zeta: 0 : 1), \qquad (\bar{\zeta}: 0 : 1)
$$
\noindent respectively. Then $C$ is given by the equation
$$
x^2 + y^2 + axz + ayz + bz^2 = \theta xy
$$
for some $\theta \in \LL$, and the automorphism $x \leftrightarrow y$ acts on $C$ and permutes the pairs $\left(p_1, p_2\right)$ and $\left(q_1, q_2\right)$.
\end{proof}

\begin{lemma}
\label{Quadrictrans}
Let $S$ be a pointless del Pezzo surface of degree $8$ with $\rho(S) = 1$ over a~perfect field $\ka$. Then the group $\Aut(S)$ transitively acts on the set of points of~degree~$2$, such that the corresponding geometric points are defined over a quadratic extension~$\ka\left(\zeta\right)$.

\end{lemma}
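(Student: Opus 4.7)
The plan is to reduce the statement to Lemma~\ref{Conictrans} by exploiting the structural description $S \cong R_{\LL/\ka} C$ provided by Lemma~\ref{Class1}, where $\LL$ is the splitting field of $S$ and $C$ is a smooth conic over $\LL$. First I would set $F = \ka(\sqrt{a})$ and use the defining property $S(\ka) = C(\LL)$ of Weil restriction together with the pointlessness of $S$ to conclude that $C(\LL) = \varnothing$. This already disposes of the case $F = \LL$: the set $S(\LL) = C(\LL \otimes_\ka \LL) = C(\LL) \times C(\LL)$ is empty, so no degree $2$ points with residue field $\LL$ exist and the statement is vacuous.

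Assume therefore $F \neq \LL$. Since $F$ and $\LL$ are distinct quadratic extensions of $\ka$, they are linearly disjoint and their tensor product is the compositum $M = \LL \otimes_\ka F = \LL(\sqrt{a})$, which is a quadratic extension of~$\LL$. The defining property of Weil restriction then gives
$$
S(F) = (R_{\LL/\ka} C)(F) = C(\LL \otimes_\ka F) = C(M),
$$
and under the canonical identification $\Gal(F/\ka) \cong \Gal(M/\LL)$ the two Galois actions agree. Consequently, the set of points of $S$ of degree $2$ over $\ka$ with residue field $F$ is in natural bijection with the set of points of $C$ of degree $2$ over $\LL$ with residue field $M$.

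The last step is to transfer transitivity through this bijection. The functor $R_{\LL/\ka}$ induces an inclusion $\Aut_\LL(C) \hookrightarrow \Aut_\ka(S)$ whose induced action on $S(F) = C(M)$ is exactly the natural action of $\Aut_\LL(C)$ on $C(M)$. Since $\sqrt{a} \notin \LL$, we have $a \in \LL \setminus \LL^2$, so Lemma~\ref{Conictrans}, applied to $C$ viewed as a conic over~$\LL$ with the extension $M/\LL = \LL(\sqrt{a})/\LL$, shows that $\Aut_\LL(C)$ acts transitively on degree~$2$ points of $C$ with residue field $M$. Through the bijection above this yields the desired transitivity of $\Aut_\ka(S)$ on degree $2$ points of $S$ with residue field $F$.

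The only non-automatic step is verifying the Galois-equivariance of the identification $S(F) = C(M)$ and the fact that the $\Aut_\LL(C)$-action on $S(F)$ obtained from Weil restriction is the natural one on $C(M)$; once these compatibilities are in place, Lemma~\ref{Conictrans} does all of the remaining work.
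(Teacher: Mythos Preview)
Your argument is correct. Both your proof and the paper's reduce the statement to Lemma~\ref{Conictrans} via the identification $S \cong R_{\LL/\ka} C$, but the executions differ in a way worth noting. The paper works inside $S_{\LL} \cong C \times C'$: it projects the two pairs of conjugate points to $C$ via $\pi_1$, invokes Lemma~\ref{Conictrans} to find $g \in \Aut^0(S) \cong \Aut(C)$ matching the projections, and then needs a separate geometric argument (examining the four intersection points of the Galois orbit of a fibre) to check that $g$ actually carries the original pair on $S$ to the other pair, not merely their $\pi_1$-images. Your approach bypasses this last step entirely: the functor-of-points identity $(R_{\LL/\ka} C)(F) = C(\LL \otimes_{\ka} F) = C(M)$, together with its built-in $\Gal(F/\ka) \cong \Gal(M/\LL)$-equivariance, gives a direct bijection between degree~$2$ points on $S$ with residue field $F$ and degree~$2$ points on $C$ with residue field $M$, so Lemma~\ref{Conictrans} finishes immediately. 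Your route is more structural and slightly shorter; the paper's is more hands-on and makes the geometry of $S_{\LL}$ visible. The one point you flag as ``non-automatic'' (Galois-equivariance of $S(F)=C(M)$ and compatibility of the $\Aut_{\LL}(C)$-action) is indeed routine functoriality of Weil restriction, and the paper implicitly relies on the same compatibility through Remark~\ref{Quadricaction}.
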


\begin{proof}
By Lemma \ref{Class1} one has $S \cong R_{\LL/\ka} C$, where $\LL$ is the splitting field of $S$ and $C$ is~a~conic over $\LL$. Let $\left(p_1, p_2\right)$ and $\left(q_1, q_2\right)$ be two pairs of conjugate points on $C$ such that the~points~$p_1$, $p_2$, $q_1$ and $q_2$ are defined over $\ka\left(\zeta\right)$. Note that $\zeta \notin \LL$ since $S_{\LL}$ is~pointless by Corollary \ref{Class2}.

By Corollary \ref{Class2} one has $S_{\LL} \cong C \times C'$, where $C'$ is the $\Gal\left(\LL / \ka\right)$-conjugate conic of~$C$. Let $\pi_1:S_{\LL} \rightarrow C$ be the projection. One has $\Aut^0\left(S\right) \cong \Aut(C)$ by~Remark~\ref{Quadricaction}, \mbox{and the group $\Aut^0\left(S\right)$} faithfully acts on $C$, since $S \cong R_{\LL/\ka} C$. Consider \mbox{an element $g \in \Aut^0\left(S\right)$} such that the induced action of $g$ on $C$ maps the~pair~$\left(\pi_1(p_1), \pi_1(p_2)\right)$ to the pair $\left(\pi_1(q_1), \pi_1(q_2)\right)$ (such $g$ exists by Lemma \ref{Conictrans}).

Note that the $\Gal\left(\LL\left(\zeta\right) / \ka \right)$-orbit of the fibre $F = \pi_1^{-1}(\pi_1(q_1))$ consists of the two fibres of $S_{\LL} \rightarrow C$ passing through $q_1$ and $q_2$, and the two fibres of $S_{\LL} \rightarrow C'$ passing through~$q_1$ and $q_2$. These fibres have exactly four common geometric points: the conjugate points $q_1$ and $q_2$, and the two other conjugate points $r_1$ and $r_2$. Note that $r_1$ and $r_2$ are not defined over $\ka\left(\zeta\right)$, since otherwise $F$ is defined over $\ka\left(\zeta\right)$ and $S$ splits over $\ka\left(\zeta\right) \neq \LL$. Therefore the pair $\left(g(p_1), g(p_2)\right)$ coincides with the pair~$\left(q_1, q_2\right)$, and~we are done.
\end{proof}

Now we can prove Proposition \ref{DP6rigid}.

\begin{proof}[Proof of Proposition \ref{DP6rigid}]

Consider the contractions of the negative sections $Z \rightarrow X$ and~${Z_1 \rightarrow X_1}$ (see Proposition \ref{CB6}(iv)). The del Pezzo surfaces $X$ and $X_1$ are isomorphic by Lemma \ref{DPlink2}. By Lemma \ref{Quadrictrans} there exists an automorphism of $X$ that maps the points of the blowup of $Z \rightarrow X$ to the points of the blowup $Z_1 \rightarrow X_1$. Therefore~${Z \cong Z_1}$.
\end{proof}

Now we prove Theorems \ref{mainrho1} and \ref{rigidity}, and give an alternative proof of Theorem \ref{main1} for~the~case of a perfect field.

\begin{proof}[Proof of Theorem \ref{mainrho1}]

Let $X$ be a pointless del Pezzo surface of degree $8$ with $\rho(X) = 1$. Assume that a minimal surface $X''$ is birationally equivalent to $X$. Then $X \dashrightarrow X''$ can~be decomposed into a sequence of Sarkisov links:
$$
X = X_0 \dashrightarrow X_1 \dashrightarrow \ldots \dashrightarrow X_{n-1} \dashrightarrow X_n = X''.
$$

By Lemma \ref{DPpoints} any Sarkisov link corresponds to the blowup of $X$ at a point of~degree~$6$, $4$ or $2$. For the cases of degree $6$ and $4$ this link immediately gives a surface $X_1 \cong X$ by~Lemmas~\ref{DPlink6} and \ref{DPlink4} respectively. For the case of degree $2$ the sequence of links passes through some non-minimal del Pezzo surfaces of degree $6$ (see Proposition \ref{CB6}(iii)) and~terminates at a del Pezzo surface $X' \cong X$ by Lemma \ref{DPlink2}. Therefore any minimal del~Pezzo surface, obtained in a sequence of Sarkisov links starting from $X$, is isomorphic to $X$. In particular, $X'' \cong X$.
\end{proof}

\begin{proof}[Proof of Theorem \ref{rigidity}]

Note that by \cite[Theorem 5.1]{Lied17} there is an embedding $X \hookrightarrow P$, where $P$ is a Severi--Brauer threefold such that the class $b(P)$ lies in $\Am(X) \subset \operatorname{Br}(\ka)$.

One can easily find a point of degree $4$ on $X$, therefore $I(X)$ divides $4$. Moreover, $I(P)$ divides $I(X)$ since any point on $X$ is also a point on $P$. There is a point of degree $I(P)$ on $P$ by \cite[Theorem 53]{Kol16}.

Assume that $I(P) \leqslant 2$, then there is a point of degree $2$ on $P$. The line~$\overline{L}$ passing through the pair of the corresponding points in $\Pro^3_{\kka} = \overline{P}$ is invariant under the action of~the~Galois group $\Gal\left(\kka / \ka\right)$. Therefore there is a smooth curve $L \subset P$ of degree $1$. One has $L \cdot X = 2$. Thus there is a point of degree $2$ on $X$, and $I(X) = 2$ or $I(X) = 1$.

Note that the classes $b(L)$ and $b(P)$ coincide in $\operatorname{Br}(\ka)$ by \cite[Definition-Lemma 31]{Kol16}. Moreover these classes coincide with the image of the class $-\frac{1}{2}K_X$ in $\Am(X)$. Therefore if~$\rho(X) = 1$ then any element of $\Am(X)$ corresponds to a conic since $\Am(X)$ is generated by the image of the class $-\frac{1}{2}K_X$. Moreover, in this case $X$ is not birationally rigid since one can blow up a point of degree $2$ and get a del Pezzo surface of degree $6$ with a structure of a relatively minimal conic bundle. If $\rho(X) = 2$ then any element of $\Am(X)$ corresponds to a conic, and $X$ is not birationally rigid by Lemma \ref{CBlink}.

Now assume that $I(P) = 4$, then $P$ cannot contain a conic by Theorem \cite[Theorem~53]{Kol16}, and the element $b(P) \in \Am(X)$ does not correspond to a conic. Moreover, one has $I(X) = 4$, and there are no points of degree $2$ on $X$ since $I(P)$ divides $I(X)$.

If $\rho(X) = 2$ then $X \cong C_1 \times C_2$ and $\Am(X) \cong \left(\mathbb{Z} / 2\mathbb{Z}\right)^2$. Any minimal surface $X'$ birationally equivalent to $X$ must be a product $C'_1 \times C'_2$ of two non-isomorphic smooth non-trivial conics by Propositions \ref{ClassCB} and \ref{ClassDP}. Therefore $b(C'_1)$ and $b(C'_2)$ are not trivial and are not equal to $b(P)$. Thus the sets $\{b(C'_1), b(C'_2)\}$ and $\{b(C_1), b(C_2)\}$ coincide, and~$X' \cong X$. Hence $X$ is birationally rigid.

If $\rho(X) = 1$ then the only possible Sarkisov link is described in Lemma \ref{DPlink4}, and~therefore~$X$ is birationally rigid.
\end{proof}

\begin{proof}[Proof of Theorem \ref{main1} for the case of a perfect field]

Consider two pointless birationally equivalent quadric surfaces $Q_1$ and $Q_2$. These surfaces are minimal. If $\rho(Q_1) = 1$ then by~Theorem \ref{mainrho2} one has $Q_1 \cong Q_2$.

If $\rho(Q_1) = 2$ then by Lemma \ref{Class1} one has $Q_1 \cong C_1 \times C_1$. By Theorem \ref{mainrho2} any minimal surface birationally equivalent to $Q_1$ has Picard number $2$, and therefore $Q_2 \cong C_2 \times C_2$ by Lemma \ref{Class1}. Thus by Theorem \ref{mainrho2}(2) one has $Q_1 \cong Q_2$.
\end{proof}

\bibliographystyle{alpha}

\begin{thebibliography}{XX}

\bibitem[Ami55]{Ami55}
S.\,A.\,Amitsur,
\newblock Generic splitting fields of central simple algebras,
\newblock Ann. Math., 62:2 (1955), 8--43

\bibitem[Cao18]{Cao18}
Y.\,Cao,
\newblock Cohomologie non ramifi\'ee de degr\'e 3: vari\'et\'es cellulaires et surfaces de del Pezzo de degr\'e au moins 5,
\newblock Annals of K-Theory, 3:1 (2018), 151--171

\bibitem[Ch05]{Ch05}
I.\,A.\,Cheltsov,
\newblock Birationally rigid Fano varieties,
\newblock Uspekhi Mat. Nauk, 2005, 60:5, 71--160 (in Russian); translation in Russian Math. Surveys, 60:5 (2005), 875--965

\bibitem[CT21]{CT21}
J.-L.\,Colliot-Th\'el\`ene,
\newblock Quadric surfaces birational to each other,
\newblock preprint, see www.imo.universite-paris-saclay.fr/$\sim$colliot/quadsurfbirat.pdf

\bibitem[CTKM08]{CTKM08}
J.-L.\,Colliot-Th\'el\`ene, N.\,A.\,Karpenko, A.\,S.\,Merkurjev,
\newblock Rational surfaces and the canonical dimension of the group $\mathrm{PGL}_6$,
\newblock Algebra i Analiz 19 (2007), no. 5, 159--178 (in Russian); translation in St. Petersburg Math. J. 19 (2008), no. 5, 793--804

\bibitem[DI09]{DI09}
I.\,V.\,Dolgachev, V.\,A.\,Iskovskikh,
\newblock Finite subgroups of the plane Cremona group,
\newblock In: Algebra, arithmetic, and geometry, vol. I: In Honor of Yu.\,I.\,Manin, Progr. Math., 269, 443--548, Birkh\"auser, Basel, 2009

\bibitem[GS18]{GS18}
S.\,Gorchinskiy, C.\,Shramov,
\newblock Unramified Brauer group and its applications,
\newblock Izdatelstvo MCNMO, Moscow, 2018, 200 pp. (in Russian); translation in Translations of Mathematical Monographs, 246, American Mathematical Society, Providence, 2018, xvii+179 pp.

\bibitem[Isk79]{Isk79}
V.\,A.\,Iskovskikh,
\newblock Minimal models of rational surfaces over arbitrary field,
\newblock Math. USSR Izv., 1979, 43, 19--43 (in Russian); translation in Math. USSR Izv., 1980, 14, no. 1, 17--39

\bibitem[Isk96]{Isk96}
V.\,A.\,Iskovskikh,
\newblock Factorization of birational mappings of rational surfaces from the point of view of Mori theory,
\newblock Uspekhi Mat. Nauk, 1996, 51, 3--72 (in Russian); translation in Russian Math. Surveys, 1996, 51, 585--652

\bibitem[Kol05]{Kol05}
J.\,Koll\'{a}r,
\newblock Conics in the Grothendieck ring,
\newblock Advances in Mathematics, 198 (2005), 27--35

\bibitem[Kol16]{Kol16}
J.\,Koll\'{a}r,
\newblock Severi--Brauer varieties; a geometric treatment,
\newblock preprint, arXiv:1606.04368 (2016)

\bibitem[Lied17]{Lied17}
Ch.\,Liedtke,
\newblock Morphisms to Brauer--Severi varieties, with application to del Pezzo surfaces,
\newblock Geometry over Nonclosed fields, 157--196, Springer, 2017

\bibitem[Sh20]{Sh20}
C.\,A.\,Shramov,
\newblock Birational automorphisms of Severi--Brauer surfaces,
\newblock Math. Sbornik, 211:3 (2020), 169--184 (in Russian); translation in Sb. Math., 211:3 (2020), 466--480

\bibitem[ShV18]{ShV18}
C.\,Shramov, V.\,Vologodsky,
\newblock Automorphisms of pointless surfaces,
\newblock preprint, arXiv:1807.06477 (2018) 

\bibitem[Sk86]{Sk86}
A.\,N.\,Skorobogatov,
\newblock Birational invariants of rational surfaces,
\newblock Matem. Zametki, 39:5 (1986), 736--746 (in Russian); translation in Math. Notes, 39:5 (1986), 404--409

\bibitem[SD72]{SD72}
H.\,P.\,F.\,Swinnerton--Dyer,
\newblock Rational points on del Pezzo surfaces of degree $5$,
\newblock Algebraic geometry (Oslo, 1970), Groningen: Wolters and Noordhoff, 1972, 287--290

\bibitem[Wei22]{Wei22}
F.\,W.\,Weinstein,
\newblock On birational automorphisms of Severi--Brauer surfaces,
\newblock Communications in Mathematics, 30 (2022), 1--9 

\end{thebibliography}

\end{document}